\definecolor{uuuuuu}{rgb}{0.27,0.27,0.27}
\definecolor{sqsqsq}{rgb}{0.1255,0.1255,0.1255}
\newtheorem{dfn}{Definition} [section]
\newtheorem{thm}[dfn]{Theorem}
\newtheorem{lemma}[dfn]{Lemma}
\newtheorem{prop}[dfn]{Proposition}
\newtheorem{coro}[dfn]{Corollary}
\newtheorem{conj}[dfn]{Conjecture}
\newtheorem{claim}[dfn]{Claim}
\def\qed{\hfill \rule{4pt}{7pt}}
\def\lc{\left\lceil}
\def\rc{\right\rceil}
\def\lf{\left\lfloor}
\def\rf{\right\rfloor}
\begin{document}
\title{\bf\Large Sparse halves in $K_4$-free graphs}

\date{\today}

\author{Xizhi Liu\thanks{Department of Mathematics, Statistics, and Computer Science, University of Illinois, Chicago, IL, 60607 USA. Email: xliu246@uic.edu.
Part of the work was done when this author was a college student at the University of Science and Technology of China.}
\and
Jie Ma\thanks{School of Mathematical Sciences,
University of Science and Technology of China,
Hefei, Anhui 230026, China.
Email: jiema@ustc.edu.cn. Research supported in part by NSFC grant 11622110.}
}
\maketitle
%\footnote{footnote}
%%%%%%%%%%%%%%%%%%%%%%%%%%%%%%%%%%%%%%%%%%%%%%%%%
\begin{abstract}
A conjecture of Chung and Graham states that every $K_4$-free graph on $n$ vertices contains a vertex set of size $\lfloor n/2 \rfloor$
that spans at most $n^2/18$ edges.
We make the first step toward this conjecture by showing that it holds for all regular graphs.
\end{abstract}
%%%%%%%%%%%%%%%%%%%%%%%%%%%%%%%%%%%%%%%%%%%%%%%%%

\section{Introduction}
Given graphs $G$ and $H$, we say $G$ is {\it $H$-free} if $G$ does not contain $H$ as a subgraph.
The celebrated Tur\'{a}n theorem \cite{T41} states that for every $r\ge 3$, the maximum number of edges in
an $n$-vertex $K_{r}$-free graph is uniquely achieved by the Tur\'{a}n graph $T_{r-1}(n)$,
which is the complete $(r-1)$-partite graph on $n$ vertices such that the sizes of every two parts differ by at most one.
Generalizing Tur\'{a}n's theorem, Erd\H{o}s \cite{E76} initialized the study of the following problem:
Given a constant $0 \le \alpha \le 1$, what is the minimum valve $\beta = \beta(\alpha,r)$
such that every $n$-vertex $K_{r}$-free graph contains a vertex set of size $\lf \alpha n \rf$ which spans at most $\beta n^2$ edges?
This is often referred as the {\it local density problem}.

The case $\alpha = 1/2$ is of special intersect.
Erd\H{o}s \cite{E97} offered $\$$250 for the first solution to the following long-standing conjecture on triangle-free graphs.

\begin{conj}[Erd\H{o}s, \cite{E76}]\label{conj-erdos-sparse-halves-triangle}
Every triangle-free graph on $n$ vertices contains a vertex set of size $\lf n/2 \rf$ that spans at most $n^2/50$ edges.
%i.e. $\beta(1/2,3) = 1/50$.
\end{conj}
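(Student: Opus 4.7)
The plan is to follow the paradigm that this paper establishes for the $K_4$-free analog, namely, to restrict attention to regular (or near-regular) triangle-free graphs where tools from extremal graph theory give finer control over the structure. The extremal example to keep in mind is the balanced blow-up of $C_5$: each of the five parts has size $n/5$, and one can check that every half-sized vertex subset spans at least $n^2/50$ edges (attained for instance by taking two non-adjacent blocks entirely together with $n/10$ vertices from a block adjacent to one of them). The rigidity of this example suggests that any proof must eventually engage with the pentagonal structure rather than rely purely on first-moment arguments.

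My first step would be a probabilistic warm-up: for a uniformly random subset $S$ of size $\lfloor n/2 \rfloor$, the expected number of spanned edges is $e(G)\cdot \frac{(n/2)(n/2-1)}{n(n-1)} \approx \frac{nd}{8}$, where $d$ is the average degree. This is at most $n^2/50$ whenever $d \le 4n/25$. Hence the sparse regime $d \le 4n/25$ is immediate, and it remains to treat $4n/25 < d \le 2n/5$; the upper bound $d \le 2n/5$ is available for regular triangle-free graphs via Andr\'asfai--Erd\H{o}s--S\'os, since any regular triangle-free $G$ with $d > 2n/5$ is bipartite, in which case one can choose $\lfloor n/2 \rfloor$ vertices entirely inside the larger part and span zero edges.

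Second, for the intermediate regime I would invoke stability versions of Andr\'asfai--Erd\H{o}s--S\'os together with the homomorphism theorems for dense triangle-free graphs: when $d$ is close to $2n/5$, $G$ is edit-close to a blow-up of $C_5$, and one can lift the ideal half-subset from the blow-up while showing that only $o(n^2)$ edges are gained in the perturbation. To continue down toward $d = 4n/25$, one would likely exploit the fact that sufficiently dense triangle-free graphs admit a homomorphism to a small Andr\'asfai graph $\Gamma_k$, reducing the problem to a finite case analysis on weighted blow-ups of these Andr\'asfai graphs, each of which can in principle be verified by a direct optimization over the block sizes.

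The hard part will be precisely the intermediate range $4n/25 < d < 2n/5$: random selection overshoots the target $n^2/50$, while stability results are only sharp near the extremal density $2n/5$, and no single Andr\'asfai graph captures all of this range. Bridging this gap quantitatively is exactly what has kept Conjecture~\ref{conj-erdos-sparse-halves-triangle} open since 1976 with a \$250 bounty; a realistic intermediate target, mirroring the main result of this paper in the $K_4$-free setting, is to establish Conjecture~\ref{conj-erdos-sparse-halves-triangle} for all regular triangle-free graphs, for which both the degree bound $d\le 2n/5$ and the complete list of extremal structures at $d=2n/5$ are already in hand.
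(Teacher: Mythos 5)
The statement you were asked about is Conjecture~\ref{conj-erdos-sparse-halves-triangle}, which the paper explicitly records as an \emph{open} conjecture of Erd\H{o}s (with a \$250 prize attached); the paper contains no proof of it, and indeed proves only the $K_4$-free analogue for regular graphs. Your submission is, by your own admission in the final paragraph, a research plan rather than a proof: the entire intermediate range $4n/25 < d < 2n/5$ is left unresolved. So there is a genuine gap --- in fact the gap is the whole content of the conjecture, since the two regimes you do settle (the first-moment bound for $d\le 4n/25$, and the Andr\'asfai--Erd\H{o}s--S\'os reduction to bipartite graphs for regular $G$ with $d>2n/5$) are standard and easy.

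Beyond the acknowledged incompleteness, the specific bridge you propose for the intermediate range cannot work as stated. Homomorphisms into Andr\'asfai graphs $\Gamma_k$ require minimum degree strictly above $n/3$: the chromatic threshold of the triangle is $1/3$, and below minimum degree $n/3$ there exist triangle-free graphs of unbounded chromatic number, which therefore admit no homomorphism into \emph{any} fixed triangle-free graph. Hence the range $4n/25 < d \le n/3$ is untouched by every tool in your second step, and stability around $d=2n/5$ only helps in an $o(n)$ window near that density. If you want a result in the spirit of this paper, the honest target is the one you name at the end --- the conjecture restricted to regular triangle-free graphs --- but note that even this is flagged in the paper's concluding remarks as open and ``perhaps still difficult,'' and your sketch does not supply the missing argument for regular graphs with $4n/25 < d \le 2n/5$ either.
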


Both of the balanced blow-ups of the 5-cycle and the Petersen graph show that the bound $n^2/50$ would be best possible if this conjecture is true.
Despite extensive research \cite{K95,KS06,NY15,BGR19}, Conjecture \ref{conj-erdos-sparse-halves-triangle} is still open.
%in general (also open for triangle-free {\it regular} graphs).

A similar question also has been asked for $K_4$-free graphs.
%has been asked for graphs without cliques of larger size. In this paper, we restrict our attention to $K_4$-free graphs,
Chung and Graham \cite{CG90}, and Erd\H{o}s, Faudree, Rousseau and Schelp \cite{EFRS94} posted the following conjecture.

\begin{conj}[Chung et al. \cite{CG90}, Erd\H{o}s et al. \cite{EFRS94}]\label{conj-Chung-Graham-sparse-half-K4}
Every $K_4$-free graph on $n$ vertices contains a vertices set of size $\lf n/2 \rf$ that spans at most $n^2/18$ edges.
%i.e. $\beta(1/2,4) = 1/18$.
\end{conj}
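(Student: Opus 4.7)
The natural extremal configuration here is the Tur\'an graph $T_3(n)$: if $A \cup B \cup C$ is its balanced tripartition and one takes $S = A \cup B'$ with $|B'| = \lfloor n/2 \rfloor - |A|$, then $S$ spans exactly $(n/3)(n/6) = n^2/18$ edges, so the conjectured bound is tight. The plan is therefore to split on $e(G)$ relative to the threshold $2n^2/9$, handling graphs far from extremal by averaging and graphs close to extremal by a stability-based structural argument.

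For the \emph{sparse regime}, observe that the expected number of edges spanned by a uniformly random set of size $\lfloor n/2 \rfloor$ equals $\binom{\lfloor n/2\rfloor}{2}/\binom{n}{2} \cdot e(G)$, which is at most $n^2/18 + o(n^2)$ whenever $e(G) \leq 2n^2/9$. Hence some half realizes the bound and the sparse range is settled by a first-moment argument, with the small error terms absorbed by discretization.

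For the \emph{dense regime}, where $e(G)$ is within $\eta n^2$ of the Tur\'an bound $n^2/3$, the plan is to invoke the Erd\H{o}s-Simonovits stability theorem to write $G$ as $T_3(n)$ edited by at most $\delta(\eta) n^2$ additions and deletions, with approximate tripartition $(A,B,C)$ of sizes close to $n/3$. One then chooses $S = A \cup B'$ with $B' \subseteq B$ of the appropriate size, selected greedily to minimize $e_G(A,B')+e_G(B')$: on the unperturbed graph this yields exactly $n^2/18$, and the perturbation is bounded by the $\delta(\eta) n^2$ edit-distance. Choosing $\eta$ small enough (and using averaging slightly above $2n^2/9$) allows these losses to be absorbed.

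The main obstacle, and the reason any such approach falls short of a complete proof, is the \emph{intermediate regime} $2n^2/9 < e(G) < (1-\eta)n^2/3$: here the first moment is loose by a constant factor while stability is not yet applicable. A natural attempt is to exploit $K_4$-freeness locally, noting that each link $G[N(v)]$ is triangle-free and therefore a candidate for Conjecture~\ref{conj-erdos-sparse-halves-triangle}, and then to lift a sparse half inside some $N(v)$ to a sparse half of $G$; but since that triangle-free conjecture is itself open, an unconditional proof here seems to require either a flag-algebra computation verifying the extremal Tur\'an configuration and a robust stability statement around it, or a genuinely new averaging identity tailored to $K_4$-freeness. Honestly, bridging this middle range is where the conjecture has resisted for three decades, and I expect it to be the decisive sticking point of any serious attack.
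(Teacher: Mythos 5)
You are attempting to prove what is, in this paper, stated only as a conjecture: the authors do not prove Conjecture~\ref{conj-Chung-Graham-sparse-half-K4} in full, but only its restriction to regular (in fact almost regular) graphs, Theorem~\ref{thm-sparse-half-regular-K4}. So your closing admission is the correct assessment --- there is a genuine gap --- but it is worth being precise about where your sketch breaks beyond the part you flag. Your first-moment computation for the sparse regime is fine, but it only covers $e(G)\le 2n^2/9$; the paper needs to push the sparse range to $0.26n^2$ (Theorem~\ref{thm-case-sparse-K4}), which requires Sudakov's max-cut lemmas combined with Lemma~\ref{lemma-upper-bound-max-cut-K4}, because the other regimes cannot reach down to $2n^2/9$. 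More seriously, your dense-regime plan does not work as stated: the target $n^2/18$ is attained with equality by $T_3(n)$, so an Erd\H{o}s--Simonovits stability argument, which controls $G$ only up to an additive $\delta(\eta)n^2$ edit distance, leaves an error of the same order with nothing to absorb it into; one needs an exact structural statement. Moreover, for non-regular graphs, edge density near $n^2/3$ gives no minimum-degree control, whereas the only exact structure theorem available (Lyle, Theorem~\ref{thm-Lyle-structure-K4}) needs $\delta(G)\ge 4n/7$; accordingly the paper's dense case, Theorem~\ref{thm-case-dense-K4}, is proved under the hypothesis $\delta(G)\ge 0.59n$ and also invokes an exact version of Krivelevich's triangle-free local-density theorem (Theorem~\ref{obsevation-triangle-free-Kriveleich}) to pin down $T_3(n)$.

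On the intermediate range, your instinct to exploit the triangle-free links $G[N(v)]$ is exactly the paper's starting point, but the unconditional substitute for the open Conjecture~\ref{conj-erdos-sparse-halves-triangle} is their Theorem~\ref{thm-local-density-K3}: a triangle-free graph with $cn^2$ edges contains an $\alpha n$-set spanning at most $(2\alpha-1)cn^2$ edges whenever $\alpha+c\ge 1$, i.e.\ a bound measured against the actual edge count rather than Mantel's $n^2/4$. Applied to the links of a graph with $\delta(G)\ge n/2$, this forces many triangles (Lemma~\ref{lemma-many-K3}) and hence three large disjoint independent sets (Lemma~\ref{lemma-lower-bound-size-three-independent}), after which an elaborate case analysis over randomized half-set choices yields a contradiction in Theorem~\ref{thm-case-medium-K4}. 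But the key counting identity for the fourth part, $e_{14}+e_{24}+e_{34}+2e_4=\sum_{v\in V_4}d(v)=2c(1-g(c))n^2$, is exactly where regularity is irreplaceable, and the minimum-degree hypotheses above are why the whole scheme does not extend to arbitrary $K_4$-free graphs. So even with the correct local tool in hand, the middle (and, for non-regular graphs, the dense) range remains open, confirming your diagnosis of where the difficulty lies, though the boundary of what is known sits at ``almost regular'' rather than at any density threshold.
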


The Tur\'{a}n graph $T_{3}(n)$ shows that
the bound $n^2/18$ in Conjecture \ref{conj-Chung-Graham-sparse-half-K4} would be best possible if it is true.
A closely related conjecture of Erd\H{o}s (see \cite{EFPS88}), which was proved by Sudakov \cite{S07},
states that every $K_4$-free graphs on $n$ vertices can be made bipartite by deleting at most $n^2/9$ edges.
An interesting interplay between these problems for {\it regular} graphs was observed by Krivelevich \cite{K95},
where he pointed out that a bound in the local density problem can imply a bound (doubled) in the problem of making a graph bipartite;
also see \cite{S07} for an illustration.

The main result of this paper is to confirm Conjecture \ref{conj-Chung-Graham-sparse-half-K4} for all {\it regular} graphs.
We prove it in the following form, which also characterizes the unique extremal graph.

\begin{thm}\label{thm-sparse-half-regular-K4}
Let $G$ be a $K_4$-free regular graph on $n$ vertices.
If every vertex set of size $\lf n/2 \rf$ in $G$ spans at least $n^2/18$ edges,
then $n$ is divisible by 6 and $G\cong T_{3}(n)$.
%Every regular $K_4$-free graph on $n$ vertices contains a vertex set of size $\lf n/2 \rf$ that spans at most $n^2/18$ edges.
%Moreover, if $G$ is a regular $K_4$-free graph on $n$ vertices and every vertex set of size $\lf n/2 \rf$ spans at least $n^2/18$ edges,
%then $G \cong T_{3}(n)$.
\end{thm}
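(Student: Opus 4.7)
The plan is to reformulate the hypothesis via regularity, combine with Sudakov's bipartization theorem~\cite{S07}, and reduce to its equality case by separately handling the balanced and unbalanced cases.

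First I would reformulate. For a $d$-regular graph $G$ with $n$ even, the degree identity $2e(S)+e(S,V\setminus S)=|S|d$ applied to both $S$ and $V\setminus S$ with $|S|=n/2$ forces $e(S)=e(V\setminus S)$. Hence the hypothesis becomes the statement that every balanced cut of $G$ has at most $|E(G)|-n^2/9$ edges.

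Next, Sudakov's theorem yields a bipartition $(A,B)$ of $V(G)$ with $e(A)+e(B)\leq n^2/9$, with equality characterizing $G\cong T_3(n)$ (and $3\mid n$). If $(A,B)$ is balanced, regularity forces $e(A)=e(B)$, and combining with the hypothesis gives Sudakov's equality case, yielding $G\cong T_3(n)$ immediately.

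The substantive case is when $(A,B)$ is unbalanced, say $|A|>n/2>|B|$. Averaging $e(S)$ over the $n/2$-subsets $S\subseteq A$ gives $e(A)\geq \frac{n^2}{18}\cdot\frac{|A|(|A|-1)}{(n/2)(n/2-1)}>\frac{n^2}{18}$, and Sudakov then forces $e(B)<n^2/18$. To contradict the hypothesis, I would construct a specific half-set $S=B\cup T$ with $T\subseteq A$ of size $n/2-|B|$, chosen so that $T$ is independent in $G$ and each $v\in T$ has small cross-degree $|N(v)\cap B|$. Since $\sum_{v\in A}|N(v)\cap B|=e(A,B)$, pigeonhole provides many low-cross-degree vertices in $A$, and the $K_4$-freeness of $G[A]$ combined with Tur\'an-type bounds on independence should guarantee that these contain an independent set of the required size, making $e(S)=e(B)+e(T,B)$ strictly less than $n^2/18$.

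The main obstacle will be ensuring the existence of a sufficiently low-cross-degree independent set $T\subseteq A$ whose contribution to $e(S)$ is small enough to violate the hypothesis; this requires simultaneously exploiting $K_4$-freeness, regularity, and the near-tightness of Sudakov's bound. For the divisibility claim $6\mid n$, a separate averaging plus integrality argument using $|E(G)|\geq 2n^2(n-1)/(9(n-2))$ (from averaging over all $n/2$-subsets) together with the Tur\'an upper bound $|E(G)|\leq |E(T_3(n))|$ and the integrality of $d=2|E(G)|/n$ rules out $n$ of other residues modulo $6$ by direct check.
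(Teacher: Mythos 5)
Your reformulation of the hypothesis (regularity forces $e(S)=e(V\setminus S)$ for balanced cuts, so the hypothesis says every balanced cut has at most $e(G)-n^2/9$ edges) is correct and is exactly Krivelevich's observation cited in the paper. The balanced case is also fine as far as it goes, but note it only applies if Sudakov's optimal bipartition \emph{happens} to be balanced, and there is no reason for that: already for $G=T_3(n)$ the optimal bipartition is $(2n/3,\,n/3)$. So essentially the whole theorem lives in your unbalanced case, and that is where the proposal has a genuine gap. Test your plan on $G=T_3(n)$ with $A$ the union of two parts and $B$ the third: $e(A)=n^2/9$, $e(B)=0$, and \emph{every} vertex of $A$ has cross-degree exactly $n/3$ to $B$, so any $T\subseteq A$ with $|T|=n/6$ gives $e(B\cup T)=|T|\cdot n/3=n^2/18$ exactly. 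Quantitatively, if $|A|=(1/2+t)n$ then your averaging gives $n^2/18-e(B)\approx \tfrac{n^2}{18}(4t+4t^2)$, so you need $T$ of size $tn$ with average cross-degree below roughly $2n/9$; but $e(A,B)=e(G)-e(A)-e(B)$ can be as large as $n^2/4$, so the average cross-degree over $A$ can exceed $0.4n$ and the pigeonhole argument produces no (let alone linearly many) low-cross-degree vertices. Moreover, $K_4$-freeness of $G[A]$ does \emph{not} guarantee a linear-size independent set (Ramsey graphs have independence number $O(\sqrt{m\log m})$), so even the existence of an independent $T$ of size $tn$ is unjustified. Finally, since near-extremal graphs force equality everywhere, a strict violation of the hypothesis cannot be extracted from these soft bounds, and your sketch gives no route to the uniqueness statement $G\cong T_3(n)$ in the unbalanced case. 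The divisibility argument is also too coarse: the averaging bound $e(G)\ge 2n^2(n-1)/(9(n-2))$ together with Tur\'an's bound and integrality of the degree leaves plenty of room and does not rule out, say, $n\equiv 3\pmod 6$.

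For comparison, the paper does not attempt a single unified argument. It first reduces to $6\mid n$ by a blow-up trick, then splits by edge density: for $e(G)\le 0.26n^2$ it combines two max-cut lemmas of Sudakov with an optimization (Lemma \ref{lemma-upper-bound-max-cut-K4}); for $\delta(G)\ge 0.59n$ it invokes Lyle's structure theorem and a stability version of Krivelevich's local-density theorem to get the extremal characterization; and in the intermediate range it proves a new local-density theorem for triangle-free graphs (Theorem \ref{thm-local-density-K3}) to extract three large disjoint independent sets, then runs a four-case linear-programming-style argument over many candidate half-sets, with computer-verified inequalities. The regularity hypothesis is used at one precise point (equation (1.2)) to count edges meeting $V_4$. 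Your proposal would need to be replaced by machinery of comparable strength in the unbalanced case; as written it cannot close.
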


We would like to remark that our proof of Theorem \ref{thm-sparse-half-regular-K4} actually shows that
Conjecture \ref{conj-Chung-Graham-sparse-half-K4} holds for all {\it almost regular} graphs,
i.e. graphs whose difference of maximum degree and minimum degree is bounded by $\epsilon n$
for some absolute constant $\epsilon>0$.\footnote{Our calculations indicate that $\epsilon$ can be chosen as $\epsilon=1/500$.}

As a corollary, Theorem \ref{thm-sparse-half-regular-K4} implies the following slightly stronger version of Sudakov's theorem in the case of regular graphs.

\begin{coro}\label{corollary-K4-bipartite}
Let $n \in \mathbb{N}$ be even.
Then every regular $K_4$-free graph on $n$ vertices can be made bipartite by removing at most $n^2/9$ edges such that each part has size exactly $n/2$.
\end{coro}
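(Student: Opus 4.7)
The plan is to deduce the corollary directly from Theorem \ref{thm-sparse-half-regular-K4} by combining the local density bound with a degree-counting identity that is specific to regular graphs. Fix an even integer $n$ and let $G$ be a regular $K_4$-free graph on $n$ vertices.

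The generic case is when $G \not\cong T_{3}(n)$. Since $n$ is even, $\lfloor n/2 \rfloor = n/2$, and the contrapositive of Theorem \ref{thm-sparse-half-regular-K4} yields a vertex set $S \subseteq V(G)$ with $|S| = n/2$ and $e(S) < n^2/18$. Let $T = V(G) \setminus S$, so $|T| = n/2$. Writing $d$ for the common degree of $G$, summing degrees inside $S$ and $T$ separately gives
\[
d \cdot \tfrac{n}{2} \;=\; 2 e(S) + e(S,T) \;=\; 2 e(T) + e(S,T),
\]
so $e(S) = e(T)$. Deleting all edges inside $S$ and all edges inside $T$ therefore removes $e(S) + e(T) = 2e(S) < n^2/9$ edges and produces a bipartite graph with parts $S, T$ of size exactly $n/2$.

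It remains to handle the extremal case $G \cong T_{3}(n)$; here Theorem \ref{thm-sparse-half-regular-K4} also forces $6 \mid n$, so write $n = 6k$ and let $V_1, V_2, V_3$ be the three parts, each of size $2k$. Partition $V_2$ arbitrarily as $V_2 = V_2' \cup V_2''$ with $|V_2'| = |V_2''| = k$, and set $A = V_1 \cup V_2'$ and $B = V_3 \cup V_2''$; then $|A| = |B| = n/2$. The edges inside $A$ are precisely those between $V_1$ and $V_2'$, numbering $(2k)(k) = n^2/18$, and symmetrically $e(B) = n^2/18$. Removing these $n^2/9$ edges leaves a bipartite graph with the required balanced bipartition.

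The deduction itself is short; all of the nontrivial content is packed into Theorem \ref{thm-sparse-half-regular-K4}. The only point requiring care is to notice that \emph{both} the regularity of $G$ and the equality $|S| = |T|$ are used to get $e(S) = e(T)$, which is what converts a one-sided local density bound into a two-sided bound on the number of edges destroyed by the bipartition.
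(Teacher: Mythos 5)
Your proof is correct and is precisely the argument the paper intends: the corollary's proof is left implicit in the paper (it points to Krivelevich's observation that a local density bound doubles into a bipartization bound for regular graphs), and that observation is exactly your degree-count $e(S)=e(T)$ combined with the explicit check for $T_{3}(n)$. One minor remark: in the extremal case, the divisibility $6\mid n$ is really forced by $G$ being a \emph{regular} complete tripartite graph on an even number of vertices (an unbalanced $T_{3}(n)$ is not regular), rather than by invoking the conclusion of Theorem \ref{thm-sparse-half-regular-K4}, whose hypothesis need not hold in your case split; this does not affect the validity of the argument.
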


For odd $n\in \mathbb{N}$, one could easily obtain a similar result as in Corollary \ref{corollary-K4-bipartite}.

We now introduce a crucial tool in our proof of Theorem \ref{thm-sparse-half-regular-K4},
which also can be viewed as a strengthening of the local density problem.
Erd\H{o}s, Faudree, Rousseau and Schelp conjectured in \cite{EFRS94} that for every $\alpha \in [17/30,1]$, every triangle-free graph on $n$ vertices contains a vertex set of size $\lf \alpha n \rf$
that spans at most $(2\alpha-1)n^2/4$ edges.
This was confirmed by Krivelevich \cite{K95} for all $\alpha\in[3/5,1]$.
%\begin{thm}[Krivelevich, \cite{K95}]\label{thm-Krivelevich-K3-local-density}
%Suppose that $\alpha \in [3/5,1]$.
%Then every triangle-free graph on $n$ vertices contains a vertex set of size $\lf \alpha n \rf$ that spans at most $(2\alpha -1)n^2/4$ edges.
%\end{thm}
The coming result shows that the bound $(2\alpha-1)n^2/4$ can be improved in the range where $\alpha$ is relatively large.

\begin{thm}\label{thm-local-density-K3}
Let $\alpha, c\in [0,1]$ satisfy $\alpha+c\geq 1$.
Then the following hold:
\begin{itemize}
\item [(1).] Every $n$-vertex triangle-free graph with $cn^2$ edges contains a vertex set of size $\lf \alpha n \rf$ that spans
at most $(2\alpha -1) c n^2$ edges.
\item [(2).] Assume that $\alpha n \in \mathbb{N}$ and $G$ is an $n$-vertex triangle-free graph.
If every vertex set of size $\alpha n$ in $G$ spans at least $(2\alpha -1) c n^2$ edges, then $G$ is regular, and vice versa.
\end{itemize}
\end{thm}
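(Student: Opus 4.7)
I plan to dualize statement (1) and then attack it by a local-exchange argument on an optimal subset. Setting $T = V \setminus S$ with $|T| = \lc (1-\alpha) n \rc$, the identity $e(S) = e(G) - (\sum_{v \in T} d(v) - e(T)) = e(G) - f(T)$, where $f(T) := \sum_{v \in T} d(v) - e(T)$ is the number of edges of $G$ incident to $T$, reformulates (1) as: exhibit $T$ of size $\lc (1-\alpha) n \rc$ with $f(T) \geq 2(1-\alpha) cn^2$.

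First I would let $T^{\ast}$ maximize $f$ over size-$\lc (1-\alpha) n \rc$ subsets and analyze its local structure. Writing $S^{\ast} = V \setminus T^{\ast}$ and computing the swap change
\[
f(T^{\ast} - v + u) - f(T^{\ast}) \;=\; d_{S^{\ast}}(u) - d_{S^{\ast}}(v) + \mathbf{1}_{uv \in E},
\]
local optimality gives $d_{S^{\ast}}(u) \leq d_{S^{\ast}}(v) - \mathbf{1}_{uv \in E}$ for every $u \in S^{\ast}$ and $v \in T^{\ast}$, hence $M := \min_{v \in T^{\ast}} d_{S^{\ast}}(v) \geq \max_{u \in S^{\ast}} d_{S^{\ast}}(u)$. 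The hypothesis $\alpha + c \geq 1$ (with $c \geq 0$) gives $2cn \geq (1-\alpha) n$, so the average degree of $G$ is at least $|T^{\ast}|$. The crucial claim is that $M \geq 2cn$ at the optimum; given this, $e(S^{\ast}, T^{\ast}) = \sum_{v \in T^{\ast}} d_{S^{\ast}}(v) \geq |T^{\ast}| \cdot M \geq 2(1-\alpha) cn^2$ immediately implies $f(T^{\ast}) \geq 2(1-\alpha) cn^2$ and proves (1).

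The main obstacle is establishing $M \geq 2cn$ at the optimum. Assuming $M < 2cn$, the triangle-free condition applied to any edge $v_1 v_2 \in E(G[T^{\ast}])$ forces $d(v_1) + d(v_2) \leq n$, hence $M \leq (n-2)/2 < n/2$. I would combine this with the Cauchy--Schwarz bound $\sum_{v \in T^{\ast}} d_{T^{\ast}}(v)^2 \leq (n - 2M)\, e(T^{\ast})$ and the global degree-sum $\sum_v d(v) = 2cn^2$ to identify an explicit swap $u \leftrightarrow v$ (with $uv \in E$) strictly increasing $f(T^{\ast})$, contradicting optimality.

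For part (2), the forward direction is a direct computation: for $d$-regular $G$ with $d = 2cn$, subtracting the degree-sum identities $\sum_{v \in S} d(v) = 2e(S) + e(S, V \setminus S) = \alpha n d$ and $\sum_{v \in V \setminus S} d(v) = 2 e(V \setminus S) + e(S, V \setminus S) = (1-\alpha) n d$ yields $e(S) - e(V \setminus S) = (2\alpha - 1) cn^2$, so $e(S) \geq (2\alpha - 1) cn^2$ with equality iff $V \setminus S$ is independent. The converse follows from the equality analysis of (1): if every $\alpha n$-set spans at least $(2\alpha - 1) cn^2$ edges, then (1) must be tight at some $S^{\ast}$, forcing $M = 2cn$, $e(T^{\ast}) = 0$, and $d_{S^{\ast}}(v) = 2cn$ for every $v \in T^{\ast}$; since $T^{\ast}$ is then independent, $d(v) = 2cn$ for every $v \in T^{\ast}$, and applying the hypothesis along swaps $S^{\ast} \mapsto S^{\ast} - u + v$ forces $d(u) = 2cn$ for every $u \in S^{\ast}$ as well, making $G$ regular.
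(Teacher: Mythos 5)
Your reduction to maximizing $f(T)=\sum_{v\in T}d(v)-e(T)$ over sets of size $(1-\alpha)n$, and the local-optimality inequality $\max_{u\in S^{\ast}}d_{S^{\ast}}(u)\le \min_{v\in T^{\ast}}d_{S^{\ast}}(v)=M$, are both correct, as is the forward direction of (2). But the argument has a genuine gap at exactly the point you flag: the claim $M\ge 2cn$ is never proved, and it carries the entire weight of the theorem. What the swap condition gives for free is only $M\ge 2e(S^{\ast})/(\alpha n)$; combining this with $e(S^{\ast},T^{\ast})\ge(1-\alpha)nM$ yields $f(T^{\ast})\ge \frac{2(1-\alpha)}{2-\alpha}cn^2$, which falls short of the target $2(1-\alpha)cn^2$ for every $\alpha<1$. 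The missing factor is precisely where triangle-freeness and $\alpha+c\ge1$ must enter: the statement is false without triangle-freeness ($K_5$ with $\alpha=3/5$, $c=2/5$ has every $3$-set spanning $3>2=(2\alpha-1)cn^2$ edges, and there the optimal $T^{\ast}$ indeed has $M=3<4=2cn$), so no argument that uses triangle-freeness only incidentally can succeed. Your proposed route to the claim does not close this: the observation ``$d(v_1)+d(v_2)\le n$ hence $M<n/2$'' is vacuous, since $M<2cn\le n/2$ is already the standing assumption (Mantel); and the bound $\sum_{v\in T^{\ast}}d_{T^{\ast}}(v)^2\le(n-2M)e(T^{\ast})$ degenerates to $0\le 0$ whenever $T^{\ast}$ is independent, which is the typical configuration (for regular $G$ the optimum is always independent). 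No improving swap is actually exhibited, and it is not clear one exists: when $e(T^{\ast})=0$ the claim amounts to asserting that the $m$-th largest degree among an optimal independent $m$-set is at least the average degree, which does not follow from the degree-sum alone. So the ``main obstacle'' you name is the whole theorem, and it remains open in your write-up. A secondary consequence is that the converse of (2) is also incomplete, since it rests on the equality analysis of the unproven claim (and the final step there only gives $d_{S^{\ast}}(u)\le 2cn-\mathbf{1}_{uv\in E}$, not $d(u)\le 2cn$, for $u\in S^{\ast}$ with neighbours in $T^{\ast}$).

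For comparison, the paper avoids optimizing a single set altogether. When $\delta(G)\ge(1-\alpha)n$ it runs an averaging argument over \emph{all} vertices $v$: since $N(v)$ is independent, extending $A_v=V\setminus N(v)$ by a random subset of $N(v)$ gives, for each $v$, an upper bound on the minimum of $e(S)$ in terms of $e(A_v,B_v)$; summing these bounds weighted by $d(v)$ and using $\sum_v e(A_v,B_v)=\sum_u d(u)^2\ge 4c^2n^3$ (Cauchy--Schwarz) produces the contradiction, and the equality case of Cauchy--Schwarz is what yields regularity in (2). When $\delta(G)<(1-\alpha)n$ it peels off minimum-degree vertices and rescales; this is the only place $\alpha+c\ge1$ is used. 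If you want to salvage your extremal/swap framework, you would need to inject triangle-freeness globally (e.g.\ via $\sum_u d(u)^2$ or via the independence of neighbourhoods) rather than only through edges inside $T^{\ast}$.
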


Note that by Mantel's theorem \cite{M07}, we have $(2\alpha -1) c n^2 \le (2\alpha -1)n^2/4$.

The rest of the paper is organized as follows.
In Section 2 we present some preliminary results.
In Section 3 we prove Theorem~\ref{thm-local-density-K3}.
In Section 4 we complete the proof of Theorem~\ref{thm-sparse-half-regular-K4}, by dividing it into three parts according to the edge density.
In Section 5 we conclude this paper by mentioning some related problems.

%It is natural to extend Conjecture \ref{conj-erdos-sparse-halves-triangle} to $K_r$-free graphs for $r\ge 4$.

%%%%%%%%%%%%%%%%%%%%%%%%%%%%%%%%%%%%%%%%%%
\section{Preliminaries}
We first introduce our notation (which is conventional).
Given a graph $G$, we will use $V(G)$ and $E(G)$ to denote its vertex set and edge set, respectively.
Let $e(G) = |E(G)|$.
We use $d(G),\Delta(G),\delta(G)$ to denote the average degree, maximum degree, and minimum degree of $G$, respectively.
For $v\in V(G)$, let $N_G(v)$ be the set of the neighbors of $v$ in $G$ and let $d_{G}(v) = |N_{G}(v)|$.
For $S\subset V(G)$ we use $G[S]$ to express the induced subgraph of $G$ on $S$ and let $e_{G}(S)$ be the number of edges in $G[S]$.
For two disjoint vertex sets $S,T \subset V(G)$, let $G[S,T]$ be the induced bipartite subgraph of $G$ with two parts $S,T$ and
let $e_{G}(S,T)$ be the number of edges in $G[S,T]$.
If it is clear from the context we omit the subscript $G$.
We also omit floors and ceilings when they are not essential in our proofs.

The following propositions can be found in literatures (e.g. \cite{KS06}).
For completeness we include their proofs here.

\begin{prop}\label{prop-1}
Let $0 \le \alpha \le 1$.
Then every $n$-vertex graph $G$ with $e$ edges contains a vertex set of size $\alpha n$ that spans at most $\alpha^2 e$ edges.
\end{prop}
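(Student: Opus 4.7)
The plan is to use a straightforward first-moment/averaging argument, which is the standard way to handle such random-subset bounds. Since the claim is sharp up to lower-order terms (and the paper explicitly allows omitting floors and ceilings), I would treat $\alpha n$ as an integer throughout.

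Concretely, I would choose a vertex set $S \subseteq V(G)$ uniformly at random among all $\binom{n}{\alpha n}$ subsets of size $\alpha n$. For any fixed edge $uv \in E(G)$, the probability that both endpoints lie in $S$ is
\[
\Pr[u,v \in S] \;=\; \frac{\binom{n-2}{\alpha n-2}}{\binom{n}{\alpha n}} \;=\; \frac{\alpha n\,(\alpha n-1)}{n\,(n-1)}.
\]
A one-line check shows $\frac{\alpha n-1}{n-1}\le \alpha$ whenever $\alpha\le 1$, so this probability is at most $\alpha^{2}$.

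By linearity of expectation, the expected number of edges spanned by $S$ is
\[
\mathbb{E}\bigl[e_G(S)\bigr] \;=\; \sum_{uv \in E(G)} \Pr[u,v \in S] \;\le\; \alpha^{2}\, e.
\]
Therefore some particular $S$ of size $\alpha n$ must satisfy $e_G(S)\le \alpha^{2}e$, which is exactly what we need.

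There is essentially no obstacle here: the only subtlety is the cosmetic gap between $\frac{\alpha n(\alpha n-1)}{n(n-1)}$ and $\alpha^{2}$, which goes the right way. An equivalent deterministic phrasing (double-counting over all $\binom{n}{\alpha n}$ subsets and averaging) would work just as well and avoids probabilistic language, but the random-subset version is cleaner and matches the style used later in the paper.
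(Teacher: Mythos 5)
Your proof is correct and is essentially identical to the paper's: both pick a uniformly random subset of size $\alpha n$, bound the probability that an edge lies inside by $\frac{\alpha n(\alpha n-1)}{n(n-1)}\le\alpha^2$, and conclude by linearity of expectation. No issues.
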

\begin{proof}
Choose $S \subset V(G)$ with $|S| = \alpha n$ uniformly at random.
Then for every edge $e$, the probability that $e$ is contained in $S$ is $\frac{\alpha n}{n} \cdot \frac{\alpha n-1}{n-1} \le \alpha^2$.
So, the expected value of $e(S)$ is at most $\alpha^2 e$.
Hence there exists a vertex set of size $\alpha n$ in $G$ that spans at most $\alpha^2e$ edges.
\end{proof}

\begin{prop}\label{prop-2}
Let $G$ be an $n$-vertex graph with $e$ edges.
Let $A\cup B = V(G)$ be a partition with $|A| = \alpha n \le n/2$.
Then there exists $S\subset B$ with $|S| = \left(1/2-\alpha\right)n$ such that
\begin{align}
e(A\cup S) & \le
e(A)+\frac{1/2-\alpha}{1-\alpha}e(A,B)+\left(\frac{1/2-\alpha}{1-\alpha}\right)^2e(B) \notag \\
& =e(G)-\frac{1}{2(1-\alpha)}e(A,B)-\frac{3/2-2\alpha}{2(1-\alpha)^2}e(B). \notag
\end{align}
\end{prop}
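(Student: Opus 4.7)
The plan is to prove this by a first-moment (averaging) argument, in the same spirit as Proposition~\ref{prop-1}, sampling a random subset of $B$ of the prescribed size and computing the expected edge count of $A\cup S$.

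Concretely, set $\beta := \frac{1/2-\alpha}{1-\alpha}$, so that $|S|/|B| = \beta$ since $|B|=(1-\alpha)n$ and $|S|=(1/2-\alpha)n$. I would pick $S\subseteq B$ with $|S|=\beta|B|$ uniformly at random, and then decompose
\[
e(A\cup S)=e(A)+e(A,S)+e(S).
\]
For any edge from $A$ to $B$, the probability that its $B$-endpoint lies in $S$ is exactly $\beta$, so $\mathbb{E}[e(A,S)]=\beta\, e(A,B)$. For any edge inside $B$, the probability that both endpoints land in $S$ equals $\tfrac{|S|(|S|-1)}{|B|(|B|-1)}\le \beta^2$, hence $\mathbb{E}[e(S)]\le \beta^2\, e(B)$. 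Adding up gives
\[
\mathbb{E}\bigl[e(A\cup S)\bigr]\le e(A)+\beta\, e(A,B)+\beta^2\, e(B),
\]
so some realization of $S$ achieves this bound. This is the first inequality in the statement.

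For the claimed equality with $e(G)=e(A)+e(A,B)+e(B)$ on the right-hand side, I would rewrite
\[
e(A)+\beta\, e(A,B)+\beta^2\, e(B)=e(G)-(1-\beta)\, e(A,B)-(1-\beta^2)\, e(B),
\]
and then simplify $1-\beta=\tfrac{1}{2(1-\alpha)}$ and $1-\beta^2=(1-\beta)(1+\beta)=\tfrac{1}{2(1-\alpha)}\cdot\tfrac{3/2-2\alpha}{1-\alpha}=\tfrac{3/2-2\alpha}{2(1-\alpha)^2}$ using $\beta=\tfrac{1/2-\alpha}{1-\alpha}$.

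I do not expect any real obstacle: the argument is a clean linearity-of-expectation calculation, and the only thing to watch is the mild algebraic bookkeeping that converts $\beta,\beta^2$ back into the coefficients $\tfrac{1}{2(1-\alpha)}$ and $\tfrac{3/2-2\alpha}{2(1-\alpha)^2}$. The hypothesis $\alpha\le 1/2$ is used only to ensure that $(1/2-\alpha)n$ is a legitimate subset size with $\beta\in[0,1]$; no other structural property of $G$ is needed.
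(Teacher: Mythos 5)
Your proof is correct and is essentially identical to the paper's own argument: both sample $S\subseteq B$ of size $(1/2-\alpha)n$ uniformly at random and apply linearity of expectation, bounding the probability that an edge of $G[B]$ survives by $\beta^2$. Your explicit verification of the algebraic identity (via $1-\beta=\tfrac{1}{2(1-\alpha)}$ and $1-\beta^2=\tfrac{3/2-2\alpha}{2(1-\alpha)^2}$) is a small addition the paper omits, but it checks out.
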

\begin{proof}
Choose $S \subset B$ with $|S| = \left(1/2-\alpha\right)n$ uniformly at random.
Then, for every $e\in E(G[A,B])$ the probability that $e$ is contained in $A\cup S$ is $\frac{1/2-\alpha}{1-\alpha}$.
Similar to the proof of Proposition \ref{prop-1},
for every $e'\in E(G[B])$ the probability that $e'$ is contained in $S$ is at most $\left(\frac{1/2-\alpha}{1-\alpha}\right)^2$.
So, the expected value of $e(A\cup S)$ is at most $e(A)+\frac{1/2-\alpha}{1-\alpha}e(A,B)+\left(\frac{1/2-\alpha}{1-\alpha}\right)^2e(B)$.
Therefore, there exists $S \subset B$ with $|S| = \left(1/2-\alpha\right)n$ such that the desired inequality holds.
\end{proof}

%%%%%%%%%%%%%%%%%%%%%%%%%%%%%%%%%%%%%%%%%%%%%%%%%
\section{Local densities in triangle-free graphs}
In this section we prove Theorem~\ref{thm-local-density-K3}.
First we show the following proposition for the ``vice versa" part of Theorem~\ref{thm-local-density-K3} (2).

\begin{prop}\label{prop-regular-triangle-free-graph}
Let $\alpha, c\in [0,1]$, $n\in \mathbb{N}$ such that $\alpha n \in \mathbb{N}$.
Suppose that $G$ is a triangle-free regular graph on $n$ vertices with $cn^2$ edges.
Then every $S\subseteq V(G)$ with $|S| = \alpha n$ spans at least $(2\alpha-1)cn^2$ edges.
\end{prop}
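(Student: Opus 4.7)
The plan is to observe that the statement is purely a degree-counting identity once regularity is in hand, so triangle-freeness is not actually used in this proposition. First I would translate the edge count into a degree: since $G$ is regular with $cn^2$ edges, every vertex has degree $d = 2cn$.

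Next, I would set $T = V(G) \setminus S$, so that $|T| = (1-\alpha)n$, and apply handshaking inside $S$:
\begin{equation*}
2e(S) + e(S,T) = \sum_{v\in S} d_G(v) = d\,|S| = 2\alpha c n^2.
\end{equation*}
This rewrites as $e(S) = \alpha c n^2 - \tfrac{1}{2}e(S,T)$, so the task reduces to bounding the bipartite edge count from above by $2(1-\alpha)c n^2$.

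For that I would use the same handshaking idea on $T$: since each vertex of $T$ has degree $d$, the number of edges incident to $T$ is $d|T| = 2(1-\alpha)c n^2$, and a fortiori
\begin{equation*}
e(S,T) \;\le\; \sum_{v \in T} d_G(v) \;=\; 2(1-\alpha)c n^2.
\end{equation*}
Plugging this into the previous identity yields $e(S) \ge \alpha c n^2 - (1-\alpha) c n^2 = (2\alpha-1)c n^2$, as required.

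There is no real obstacle here; the only thing worth emphasizing in the write-up is that the triangle-free hypothesis is superfluous for this direction (it will matter for the forward direction of Theorem~\ref{thm-local-density-K3}(2), where one needs to deduce regularity from tightness of the local density bound). The argument is a one-line double counting, so I would keep the proof short and simply note that equality throughout forces $e(S,T) = d|T|$, i.e., $T$ is an independent set, which will be useful for the equality analysis later.
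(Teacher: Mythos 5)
Your proof is correct and is essentially identical to the paper's: both compute $2e(S)+e(S,T)=2\alpha cn^2$ by handshaking on $S$ and bound $e(S,T)\le \sum_{v\in T}d(v)=2(1-\alpha)cn^2$, then subtract. Your side remark that triangle-freeness is not needed here is also accurate.
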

\begin{proof}
Let $S\subset V(G)$ be a set with size $\alpha n$ let $T=V(G)\setminus S$.
Since $G$ is regular, every vertex has degree $2cn$, which shows that
\begin{align}
2e(S)+e(S,T)= \sum_{v\in S}d(v) = 2\alpha cn^2 \notag
\quad {\rm and} \quad
e(S,T) \le \sum_{v\in T}d(v) = 2(1-\alpha)cn^2. \notag
\end{align}
Therefore,
\begin{align}
e(S)
= \frac{1}{2}\left(2e(S)+e(S,T) -e(S,T)\right)
\ge \frac{1}{2}(2\alpha cn^2-2(1-\alpha)cn^2) = (2\alpha-1)cn^2, \notag
\end{align}
which completes the proof of Proposition \ref{prop-regular-triangle-free-graph}.
\end{proof}

Now we prove Theorem \ref{thm-local-density-K3}.
The core of the proof is a probabilistic argument.
For convenience we will assume $\alpha n \in \mathbb{N}$ in the coming presentation,
while the proof for the case $\alpha n \not\in \mathbb{N}$ holds analogously.

\begin{proof}[Proof of Theorem \ref{thm-local-density-K3}]
Let $\alpha+c\geq 1$ and $G$ be an $n$-vertex triangle-free graph with $cn^2$ edges.
Our goal is to find a subset $S\subseteq V(G)$ with $|S| = \alpha n$ that spans at most $(2\alpha-1)cn^2$ edges.
It is clear that we may assume $\alpha<1$.
We divide the proof into two cases by considering the value of $\delta(G)$.

First suppose that $\delta(G)\ge(1-\alpha)n$.\footnote{We point out that this case holds even without requiring $\alpha+c\geq 1$.}
Suppose for the contrary that every subset of size $\alpha n$ spans more than $(2\alpha-1)cn^2$ edges.
For every $v\in V(G)$, let $B_v = N(v)$ and $A_v =V(G)\setminus B_v$.
Since $G$ is triangle-free, $B_v$ is an independent set and hence $e(A_v) + e(A_v,B_v) = c n^2$.
Let $d_v = d(v)/n$.
By the similar argument as in Proposition \ref{prop-2}, there exists $S\subseteq B_v$ with $|S|=(\alpha+d_v-1)n$ such that
\begin{align}
e(A_v\cup S) \le e(A_v)+\frac{\alpha+d_v-1}{d_v}e(A_v,B_v). \notag
\end{align}
Since $|A_v\cup S| = \alpha n$, by assumption, we have
\begin{align}
e(A_v)+\frac{\alpha+d_v-1}{d_v}e(A_v,B_v)
\ge e(A_v\cup S) > (2\alpha-1)c n^2, \notag
\end{align}
which together with $e(A_v) + e(A_v,B_v) = c n^2$ gives
\begin{align}
cn^2 - \frac{1-\alpha}{d_v}e(A_v,B_v)>(2\alpha-1)c n^2. \notag
\end{align}
Therefore,
\begin{align}
\sum_{v\in V(G)}\left( cn^2 - \frac{1-\alpha}{d_v}e(A_v,B_v) \right) d_v > \sum_{v\in V(G)} (2\alpha-1)cn^2 d_v, \notag
\end{align}
which implies
$$(1-\alpha)\sum_{v\in V(G)}e(A_v,B_v)< 2(1-\alpha)cn^2\sum_{v\in V(G)}d_{v}.$$
Since $\sum_{v\in V(G)}d_v= 2cn$ and $\alpha<1$,
this gives
\begin{align}
\sum_{v\in V(G)}e(A_v,B_v)<4c^2n^3. \notag
\end{align}
On the other hand, since $B_v$ is independent for each $v$, by the Cauchy-Schwarz inequality
\begin{align}
\sum_{v\in V(G)}e(A_v,B_v)= \sum_{v\in V(G)} \sum_{u\in N(v)} d(u)
= \sum_{u\in V(G)}\left(d(u)\right)^2 \notag \ge \frac{1}{n} \left(\sum_{u\in V(G)}d(u)\right)^2=4c^2 n^3, \notag
\end{align}
which is a contradiction.
Therefore, if $\delta(G)\ge(1-\alpha)n$,
then there exists a vertex set of size $\alpha n$ that spans at most $(2\alpha-1)cn^2$ edges.
Note that if every vertex set of size $\alpha n$ spans at least $(2\alpha-1)cn^2$ edges,
then by the above arguments, we see that $d(v)$ must be the same for all $v\in V(G)$, that is, $G$ is regular.

Now suppose that $\delta(G)<(1-\alpha)n$, where $\alpha+c\geq 1$.
Choose $v\in V(G)$ such that $d(v) = \delta(G) <(1-\alpha)n$ and remove $v$ from $G$.
We iteratively remove a vertex with the minimum degree in the remaining graph until
there is no vertex left or the remaining graph $G'$ satisfies $\delta(G') \ge (1-\alpha)n$.
Let $A$ denote the set of vertices we removed in this process and let $k=|A|/n$.
If $|A| = n$, then $e(G) < (1-\alpha)n^2 \le cn^2$, a contradiction.
So $|A| < n$, which implies that $G' \neq \emptyset$.
Since $\delta(G') \ge (1-\alpha)n$, we have $|V(G')| > (1-\alpha)n$.
Therefore, $k = |A|/n = (n-|V(G')|)/n < \alpha$.
Let $B = V(G)\setminus A$ and let $G' = G[B]$.
Also let $\tilde{n}=(1-k)n$ and $\tilde{\alpha}=\frac{\alpha-k}{1-k}$.
Since $\delta(G') \ge (1-\alpha) n = (1-\tilde{\alpha})\tilde{n}$,
by the previous case, there exists $S\subseteq B$ with $|S| = \tilde{\alpha}\tilde{n}$
such that $e(S) \le (2\tilde{\alpha}-1) e(B)$.
Now we obtain a desired subset $A\cup S$ in $G$ with size $|A\cup S|=kn+\tilde{\alpha}\tilde{n}=\alpha n$ and
\begin{align}
e(A\cup S)
& = e(A) + e(A,S) + e(S)
 \le   e(A) + e(A,B) + (2\tilde{\alpha}-1) e(B) \notag\\
& = (2\tilde{\alpha}-1) \left(e(A) + e(A,B)+e(B)\right) + 2(1-\tilde{\alpha})\left(e(A) + e(A,B)\right) \notag\\
& < (2\tilde{\alpha}-1) cn^2 + 2(1-\tilde{\alpha})k(1-\alpha)n^2 \notag \le (2\alpha -1)cn^2, \notag
\end{align}
where the second last inequality is strict since
$e(A) + e(A,B) < |A|(1-\alpha)n = k(1-\alpha)n^2$, and the last inequality follows from
\begin{align}
(2\tilde{\alpha}-1)c+2(1-\tilde{\alpha})(1-\alpha)k-(2\alpha-1)c
= \frac{2k(1-\alpha)(\alpha+c-1)}{k-1}
\le 0.   \notag
\end{align}
Therefore in case of $\delta(G)<(1-\alpha)n$, there always exists a subset of size $\alpha n$ spanning strictly less than $(2\alpha-1)cn^2$ edges.
Together with Proposition~\ref{prop-regular-triangle-free-graph},
we have finished the proofs of Theorem \ref{thm-local-density-K3} for both (1) and (2).
\end{proof}

%%%%%%%%%%%%%%%%%%%%%%%%%%%%%%%%%%%%%%%%%%%%%%%%
\section{Sparse halves}
In this section we prove Theorem \ref{thm-sparse-half-regular-K4}.
Let $G$ be a $K_4$-free graph on $n$ vertices.
For a vertex set $S \subset V(G)$ with $|S| = \lf n/2 \rf$, we call it a {\it sparse half} of $G$ if $e(S) \le n^2/18$.

We will consider three cases regarding the edge density of $G$ and use quite different techniques in each case.
If $G$ is sparse, then we will use some probabilistic arguments to show that it contains a sparse half.
If $G$ is dense, then a result of Lyle \cite{L14} gives a nice structure on $G$ and this enables us to find a sparse half.
The most intricate case is when the edge density of $G$ is intermediate.
In this case, assuming $G$ does not contain a sparse half,
we will first find three large disjoint independent sets in $G$ (by using Theorem~\ref{thm-local-density-K3}),
and then building on these sets, use probabilistic arguments (in a complicated way) to derive a contradiction.
Finally, we infer Theorem \ref{thm-sparse-half-regular-K4} from these cases in Section~\ref{subsec:together}.
%We point out that in all three cases we will work on graphs which are not necessarily regular.

In the rest of this section we will state our results without assuming the parities of integers $n$.
However for convenience, in the proofs we will always view $n$ as even in order to avoid the floors
(while the same arguments also work for odd $n$).
For Theorem \ref{thm-sparse-half-regular-K4}, we will see in Section 4.4 that it suffices to only consider when $n$ is divisible by $6$.

\subsection{Sparse range}
In this section we will prove the following for graphs with few edges.

\begin{thm}\label{thm-case-sparse-K4}\label{thm-sparse-half-regular-K4-sparse}
%Let $n\in \mathbb{N}$ be even.
Suppose that $G$ is a $K_4$-free graph on $n$ vertices with at most $0.26 n^2$ edges.
Then $G$ contains a vertex set of size $\lf n/2 \rf$ that spans strictly less than $n^2/18$ edges.
\end{thm}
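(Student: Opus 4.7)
The plan is to combine the probabilistic Propositions~\ref{prop-1} and~\ref{prop-2} with the structural observation that, in a $K_4$-free graph, every neighborhood $N(v)$ induces a triangle-free graph, so Mantel's theorem applies locally.

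First I would apply Proposition~\ref{prop-1}: for a uniformly random subset $S\subseteq V(G)$ of size $n/2$,
\[
\mathbb{E}[e(S)] \;=\; \tfrac{n-2}{4(n-1)}\,e(G) \;<\; e(G)/4.
\]
Whenever $e(G)$ lies strictly below $2n^2/9$ (up to a lower-order correction in $n$), this already produces an $S$ with $e(S)<n^2/18$ and we are done. Hence we reduce to the residual range $2n^2/9 \lesssim e(G) \le 0.26\,n^2$, in which the average degree of $G$ exceeds $4n/9$.

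In the residual range I would pick a vertex $v$ of maximum degree $d=\Delta(G) \ge 2e(G)/n$. Since $G$ is $K_4$-free, $G[N(v)]$ is triangle-free and Mantel's theorem gives $e(N(v))\le d^2/4$. When $d \ge n/2$ (which is forced as soon as $e(G)\ge n^2/4$), I apply Proposition~\ref{prop-2} with $A=V(G)\setminus N(v)$ and $B=N(v)$; writing $\beta=(2d-n)/(2d)$ and $\sigma_v = \sum_{u\in N(v)}d(u)$, the bound of Proposition~\ref{prop-2} rearranges via $\sigma_v = e(A,B)+2e(B)$ into
\[
e(A\cup S)\;\le\; e(G) - (1-\beta)\sigma_v + (1-\beta)^2 e(B).
\]
Combining the Mantel ceiling $e(B)\le d^2/4$ with a lower bound on $\sigma_v$ coming from Cauchy--Schwarz, $\sum_v \sigma_v = \sum_u d(u)^2 \ge 4e(G)^2/n$, and picking $v$ so that $d$ and $\sigma_v$ are simultaneously large, should close the gap to $e(A\cup S)<n^2/18$. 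For the complementary subregime $2n^2/9\le e(G)\le n^2/4$, where $d$ may lie below $n/2$, I would instead take $A=N(v)$ of size $d\le n/2$ in Proposition~\ref{prop-2}, so that the Mantel bound $e(A)\le d^2/4$ enters directly into the first term of the estimate.

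The main obstacle is the numerical calibration: the inequality must stay strictly less than $n^2/18$ throughout the entire window $e(G)\le 0.26\,n^2$. A single pass of Proposition~\ref{prop-2} at a given vertex may fail to simultaneously control $d$, $\sigma_v$, and $e(B)$ at a boundary corner, so one may have to average over a family of vertices weighted by $d(v)$, or else iteratively delete a vertex of small degree --- noting that this operation cannot push the edge density above $0.26\,n^2$ on the smaller graph --- and restart the argument on the resulting subgraph.
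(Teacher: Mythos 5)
Your opening reduction is fine and coincides with the paper's: Proposition~\ref{prop-1} disposes of the case $e(G)<\tfrac{2}{9}n^2$, so one may assume $c:=e(G)/n^2\in[2/9,0.26]$. The core of your argument, however, has a genuine gap that is not merely ``numerical calibration.'' Take $G$ regular of degree $d=2cn$ and run your main step: $A=V(G)\setminus N(v)$, $B=N(v)$, $\delta=d/n$. Regularity gives $e(A,B)=2c\delta n^2-2e(B)$, and substituting into Proposition~\ref{prop-2} the bound collapses exactly to
\begin{align}
e(A\cup S)\;\le\; cn^2-\frac{1}{2\delta}e(A,B)-\frac{2\delta-1/2}{2\delta^2}\,e(B)\;=\;\frac{e(B)}{4\delta^2},\notag
\end{align}
with no dependence on $c$ left over. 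Mantel gives only $e(B)\le d^2/4=\delta^2n^2/4$, so the best you can conclude is $e(A\cup S)\le n^2/16$, which is strictly worse than $n^2/18$ \emph{for every} $c$ in the residual range. The Cauchy--Schwarz bound $\sum_v\sigma_v\ge 4e(G)^2/n$ cannot rescue this: for a regular graph it holds with equality ($\sigma_v=4c^2n^2$ for every $v$), so there is no vertex with ``simultaneously large $d$ and $\sigma_v$'' beyond the average, and regular graphs are precisely the case the main theorem needs. The same factor-of-$9/8$ shortfall kills your complementary subregime with $A=N(v)$: already the first term $e(A)\le d^2/4$ equals $n^2/16>n^2/18$ when $d=n/2$. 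What your scheme would need is $e(N(v))\le \tfrac{2}{9}d^2$ for some suitable $v$, and Mantel only supplies $\tfrac14 d^2$; nothing in your proposal excludes a graph all of whose neighborhoods are near-extremal for Mantel.

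The missing idea is a dichotomy on the quantities $e(N(v))$, which the paper implements via two max-cut lemmas of Sudakov. If the $e(N(v))$ are small on average, then Lemma~\ref{lemma-max-cut-general-graph} (whose lower bound \emph{subtracts} $\tfrac{2}{n}\sum_v e(N(v))$) produces a large bipartite subgraph; if they are large, then each $N(v)$ is a dense triangle-free graph, hence nearly bipartite, and Lemma~\ref{lemma-max-cut-K4} (whose lower bound contains the term $4e(N(v))^2/d(v)^2$) produces a large bipartite subgraph. A fixed convex combination ($\lambda=8/13$) of the two bounds, minimized over the ratio $e(N(v))/d(v)^2$, yields a cut of size at least $(\tfrac{4}{13}c+\tfrac{111}{104}c^2)n^2>\tfrac94c^2n^2$ for all $c\le 0.26$. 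The separate Lemma~\ref{lemma-upper-bound-max-cut-K4} then shows (by the same two propositions you invoke, but applied to the two sides of the cut rather than to a single vertex neighborhood) that a cut exceeding $\tfrac94c^2n^2$ forces a sparse half. Your single-neighborhood argument only ever sees one $e(N(v))$ and has no mechanism to exploit the case where it is large, which is exactly the regime near $c=0.26$.
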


We need the following two lemmas from \cite{S07} which are proved by probabilistic arguments. %but for completeness we include their proofs here.
Let $t(G)$ denote the number of triangles in $G$.

\begin{lemma}[Sudakov, \cite{S07}]\label{lemma-max-cut-general-graph}
Every graph $G$ on $n$ vertices contains a bipartite subgraph $G'$ such that
\begin{align}
e(G') \ge \frac{1}{n}\sum_{v\in V(G)}\left(d(v)\right)^2 - \frac{2}{n}\sum_{v\in V(G)}e\left(N(v)\right)
\ge \frac{4\left(e(G)\right)^2}{n^2} - \frac{6t(G)}{n}. \notag
\end{align}
\end{lemma}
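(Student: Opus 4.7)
The plan is to pick a random vertex and take $G'$ to be the bipartite subgraph spanned by the partition of $V(G)$ into $N(v)$ and its complement. Concretely, I would choose $v \in V(G)$ uniformly at random and define $G'_v = G[N(v), V(G)\setminus N(v)]$. The key identity is
\begin{align*}
e(G'_v) = \sum_{u \in N(v)} d(u) - 2\,e(N(v)),
\end{align*}
because $\sum_{u \in N(v)} d(u)$ counts each edge inside $N(v)$ twice and each edge leaving $N(v)$ exactly once. This is just a deterministic bookkeeping step, so no real obstacle.

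Next I would compute $\mathbb{E}[e(G'_v)]$. Swapping summations and using that $u \in N(v)$ if and only if $v \in N(u)$ gives
\begin{align*}
\mathbb{E}[e(G'_v)] = \frac{1}{n}\sum_{v \in V(G)} \sum_{u \in N(v)} d(u) - \frac{2}{n}\sum_{v\in V(G)} e(N(v)) = \frac{1}{n}\sum_{u \in V(G)} d(u)^2 - \frac{2}{n}\sum_{v\in V(G)} e(N(v)).
\end{align*}
Choosing $v^*$ to be any vertex where $e(G'_{v^*})$ meets or exceeds this expectation, and letting $G' := G'_{v^*}$, delivers the first inequality in the lemma.

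For the second inequality there are two standard reductions, both of which I would invoke explicitly. Cauchy--Schwarz (or the power-mean inequality) yields
\begin{align*}
\sum_{v \in V(G)} d(v)^2 \ge \frac{1}{n}\left(\sum_{v\in V(G)} d(v)\right)^2 = \frac{4\,e(G)^2}{n},
\end{align*}
which handles the first term. For the second term, I would observe that each triangle $\{a,b,c\}$ in $G$ contributes the edge $bc$ to $N(a)$, the edge $ac$ to $N(b)$, and the edge $ab$ to $N(c)$, giving $\sum_{v\in V(G)} e(N(v)) = 3\,t(G)$. Combining these two facts with the earlier bound gives $e(G') \ge \frac{4\,e(G)^2}{n^2} - \frac{6\,t(G)}{n}$, as required.

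There is no serious obstacle here; the only point one has to spot is that picking the bipartition associated with a single vertex's neighborhood is the right probabilistic object, since it naturally links the cut size to both $\sum_v d(v)^2$ (via the neighborhood-degree sum) and to the triangle count (via edges inside neighborhoods).
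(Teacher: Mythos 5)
Your proof is correct and is exactly the standard argument behind this lemma (the paper only cites Sudakov \cite{S07} for it): take the cut $(N(v),V(G)\setminus N(v))$ for a uniformly random $v$, compute the expectation via $\sum_{v}\sum_{u\in N(v)}d(u)=\sum_u d(u)^2$ and $\sum_v e(N(v))=3t(G)$, and finish with Cauchy--Schwarz. Nothing is missing.
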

%\begin{proof}
%Choose $v\in V(G)$ uniformly at random.
%Then the expected value of $e\left(N(v),V(G)\setminus N(v)\right)$ is
%\begin{align}
%\frac{1}{n} \sum_{v\in V(G)} \left(\sum_{u\in N(v)}d(u)-2e\left(N(v)\right)\right)
%& = \frac{1}{n}\sum_{v\in V(G)}\left(d(v)\right)^2 - \frac{2}{n}\sum_{v\in V(G)}e\left(N(v)\right) \notag\\
%& \ge \left(\frac{\sum_{v\in V(G)}d(v)}{n}\right)^2-\frac{6t(G)}{n}
%= \frac{4\left(e(G)\right)^2}{n^2} - \frac{6t(G)}{n}. \notag
%\end{align}
%Therefore, there exists $v\in V(G)$ such that $G' := G[N(v),V(G)\setminus N(v)]$ satisfies the inequality in Lemma \ref{lemma-max-cut-general-graph}.
%\end{proof}

\begin{lemma}[Sudakov, \cite{S07}]\label{lemma-max-cut-K4}
Every $K_4$-free graph on $n$ vertices contains a bipartite subgraph $G'$ such that
\begin{align}
e(G') \ge \frac{e(G)}{2} + \frac{1}{n} \sum_{v\in V(G)}\left(\frac{4\left(e(N(v))\right)^2}{\left(d(v)\right)^2}-\frac{e(N(v))}{2}\right). \notag
\end{align}
\end{lemma}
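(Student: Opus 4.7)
The plan is to combine a deterministic use of the triangle-free neighbourhoods of $G$ with the standard independent-coin randomized bipartition argument. Since $G$ is $K_4$-free, for every $v\in V(G)$ the induced subgraph $G[N(v)]$ is triangle-free, so Lemma~\ref{lemma-max-cut-general-graph} applied to $G[N(v)]$ with $t=0$ produces a bipartition $N(v)=X_v\cup Y_v$ with
\[
e_G(X_v,Y_v)\ \ge\ \frac{4\bigl(e(N(v))\bigr)^2}{\bigl(d(v)\bigr)^2}.
\]
These per-vertex partitions play the role of a ``locally optimal'' skeleton around each vertex.

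Next I would set up the following two-stage random experiment. First, sample $v\in V(G)$ uniformly at random. Then form a random bipartition $V(G)=A\cup B$ by placing $X_v\subseteq A$ and $Y_v\subseteq B$, and independently assigning each $u\in V(G)\setminus N(v)$ to $A$ or $B$ with probability $1/2$. Let $G'$ be the resulting bipartite subgraph of $G$ consisting of all cut edges. Conditional on $v$, the edges inside $N(v)$ contribute at least $4(e(N(v)))^2/(d(v))^2$ to $e(G')$ deterministically, since $X_v,Y_v$ is fixed. Every remaining edge, i.e.\ every edge with at least one endpoint in $V(G)\setminus N(v)$, is cut with probability exactly $1/2$, since that endpoint carries an independent fair coin. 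There are $e(G)-e(N(v))$ such edges, so
\[
\mathbb{E}\bigl[e(G')\mid v\bigr]\ \ge\ \frac{4\bigl(e(N(v))\bigr)^2}{\bigl(d(v)\bigr)^2}+\frac{e(G)-e(N(v))}{2}.
\]

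Averaging over the uniform choice of $v$ collapses $\frac{1}{n}\sum_v\frac{e(G)-e(N(v))}{2}$ into $\frac{e(G)}{2}-\frac{1}{2n}\sum_v e(N(v))$ and yields exactly the bound stated in the lemma; fixing a choice of $v$ and a realization of the coins that attains at least the expectation produces the desired bipartite subgraph $G'$. I do not anticipate a serious obstacle here: the only care required is bookkeeping which edges are already fixed by the neighbourhood partition and which still carry an independent coin, and in particular the observation that the edges incident to $v$ itself lie between $v\in V(G)\setminus N(v)$ and $N(v)$, so the coin on $v$ alone already gives each of them probability $1/2$ of being cut.
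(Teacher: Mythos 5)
Your proof is correct and is essentially Sudakov's original argument, which the paper only cites from \cite{S07} without reproducing: apply the max-cut bound for triangle-free graphs to each neighbourhood $G[N(v)]$, fix that optimal bipartition of $N(v)$, randomly distribute the remaining vertices, and average over the uniform choice of $v$. The bookkeeping of which edges are cut deterministically versus with probability $1/2$ is handled correctly, so there is nothing to fix.
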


%\begin{proof}
%Let $v\in V(G)$.
%Since $G$ is $K_4$-free, $G[N(v)]$ is triangle-free.
%So by Lemma \ref{lemma-max-cut-general-graph}, there exists a partition $A' \cup B = N(v)$
%such that
%\begin{align}
%e(A',B') \ge \frac{4\left(e(N(v))\right)^2}{\left(d(v)\right)^2}. \notag
%\end{align}
%Now choose $S \subset V(G)\setminus N(v)$ with $|S| = (n-d(v))/2$ uniformly at random and let $A = A' \cup S$ and $B = V(G) \setminus A$.
%Then the expected value of $e(A,B)$ is
%\begin{align}
%\frac{e(V(G)\setminus N(v))}{2} + \frac{e(N(v),V(G)\setminus N(v)) + e(A',B')}{2}
% \ge  \frac{e(G)-e(N(v))}{2} + \frac{4\left(e(N(v))\right)^2}{\left(d(v)\right)^2}. \notag
%\end{align}
%Averaging over all $v\in V(G)$ we obtain
%\begin{align}
%\mathbb{E}[e(A,B)]
%& \ge \frac{1}{n} \sum_{v\in V(G)} \left( \frac{e(G)-e(N(v))}{2} + \frac{4\left(e(N(v))\right)^2}{\left(d(v)\right)^2} \right) \notag \\
%& = \frac{e(G)}{2} + \frac{1}{n} \sum_{v\in V(G)}\left(\frac{4\left(e(N(v))\right)^2}{\left(d(v)\right)^2}-\frac{e(N(v))}{2}\right). \notag
%\end{align}
%Therefore, there exists a partition $A\cup B=V(G)$ such that $G':=G[A,B]$ satisfies the inequality in Lemma \ref{lemma-max-cut-K4}.
%\end{proof}

The next lemma shows that if a $K_4$-free graph $G$ contains a large enough bipartite subgraph, then it contains a sparse half.

\begin{lemma}\label{lemma-upper-bound-max-cut-K4}
Let
%$n\in \mathbb{N}$ be even and
$G$ be a $K_4$-free graph on $n$ vertices with $cn^2$ edges.
Suppose that there is a partition $A\cup B = V(G)$ such that $e(A,B) > 9c^2n^2/4$.
Then $G$ contains a vertex set of size $\lf n/2 \rf$ that spans strictly less than $n^2/18$ edges.
\end{lemma}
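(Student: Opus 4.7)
Without loss of generality, assume $|A|\le n/2$ and write $|A|=\alpha n$ with $\alpha\in[0,1/2]$. My plan is to exhibit two candidate subsets of size $n/2$ and show, by contradiction, that the hypothesis $e(A,B)>9c^2n^2/4$ forces at least one of them to span strictly fewer than $n^2/18$ edges. The first candidate extends $A$ into $B$, while the second lies entirely inside $B$.

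Applying Proposition~\ref{prop-2} to the partition $A\cup B$ yields $S_1\subseteq B$ with $|S_1|=(1/2-\alpha)n$ and
\[
e(A\cup S_1)\le cn^2-\frac{e(A,B)}{2(1-\alpha)}-\frac{(3/2-2\alpha)\,e(B)}{2(1-\alpha)^2}.
\]
Applying Proposition~\ref{prop-1} to the induced subgraph $G[B]$ with scaling factor $\frac{1}{2(1-\alpha)}$ yields $S_2\subseteq B$ with $|S_2|=n/2$ and $e(S_2)\le \frac{e(B)}{4(1-\alpha)^2}$.

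Suppose toward contradiction that $e(A\cup S_1)\ge n^2/18$ and $e(S_2)\ge n^2/18$. The second inequality gives $e(B)\ge \frac{2(1-\alpha)^2n^2}{9}$, and substituting this into the first (using the identity $\tfrac{1}{18}+\tfrac{3/2-2\alpha}{9}=\tfrac{2(1-\alpha)}{9}$) yields $\tfrac{e(A,B)}{2(1-\alpha)}\le n^2\bigl(c-\tfrac{2(1-\alpha)}{9}\bigr)$. Invoking the hypothesis $e(A,B)>9c^2n^2/4$ then reduces to
\[
\frac{9c^2}{8(1-\alpha)}+\frac{2(1-\alpha)}{9}<c,
\]
which contradicts the AM--GM inequality $\tfrac{9c^2}{8(1-\alpha)}+\tfrac{2(1-\alpha)}{9}\ge 2\sqrt{c^2/4}=c$. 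Hence one of $A\cup S_1$, $S_2$ is a sparse half.

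The main obstacle I expect is identifying this precise pair of candidates. Proposition~\ref{prop-2} alone with the trivial bound $e(B)\ge 0$ is too weak when $\alpha$ is far from $1/2$, while Proposition~\ref{prop-1} applied to $G[B]$ alone is too weak when $e(B)$ is near its upper bound; only together do they produce a contradiction tight for every $\alpha\in[0,1/2]$, arising from the square-of-a-binomial $(9c-4(1-\alpha))^2\ge 0$ underlying the AM--GM step, with equality at $c=4(1-\alpha)/9$. The constant $9c^2/4$ in the hypothesis is calibrated precisely so that this extremal arithmetic works.
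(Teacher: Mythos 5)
Your proof is correct and follows essentially the same route as the paper: the same two candidate sets (Proposition~\ref{prop-1} applied to $G[B]$ to lower-bound $e(B)$, and Proposition~\ref{prop-2} applied to the partition to upper-bound $e(A,B)$), combined in the same way. The only cosmetic difference is that you close with AM--GM, whereas the paper explicitly maximizes $2(1-a)c-\tfrac{4}{9}(1-a)^2$ over $a$ (first reducing to $c\ge 2/9$ so the maximizer lies in range) --- your AM--GM step sidesteps that reduction, which is a slight tidying.
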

\begin{proof}
Suppose for the contrary that every vertex set of size $n/2$ in $G$ spans at least $n^2/18$ edges.
By Proposition \ref{prop-1}, we may assume that $c\geq \frac{2}{9}$.
Assume that $a := |A|/n \le 1/2$.
Applying Proposition \ref{prop-1} to $G[B]$, we obtain a vertex set $S\subset B$ with $|S| = n/2$
such that $e(S) \le \left(\frac{1/2}{1-a}\right)^2 e(B)$.
By assumption we have $\left(\frac{1/2}{1-a}\right)^2 e(B) \ge n^2/18$, which implies
\begin{align}
e(B) \ge \frac{2(1-a)^2}{9} n^2. \notag
\end{align}

Now applying Proposition \ref{prop-2} to $A\cup B$, we see that there exists $T \subset V(G)$ with $|T| = n/2$ such that
$A\subset T$ and $e(T) \le cn^2-\frac{1}{2(1-a)}e(A,B)-\frac{3/2-2a}{2(1-a)^2}e(B)$.
By assumption, we have
\begin{align}
\frac{n^2}{18}
\le cn^2-\frac{1}{2(1-a)}e(A,B)-\frac{3/2-2a}{2(1-a)^2}e(B)  \notag\le cn^2-\frac{1}{2(1-a)}e(A,B) - \frac{3/2-2a}{9} n^2, \notag
\end{align}
which implies
\begin{align}
e(A,B) \le 2(1-a) \left(c-\frac{3/2-2a}{9}-\frac{1}{18}\right)n^2
= \left(2(1-a)c-\frac{4}{9}(1-a)^2\right)n^2. \notag
\end{align}
Since the maximum of $2(1-a)c-4(1-a)^2/9$ is attained when $a = 1-9c/4$ (note that $a = 1-9c/4\leq 1/2$ as $c\geq 2/9$), we obtain
\begin{align}
e(A,B)
\le \left(2\left(1-\left(1-\frac{9c}{4}\right)\right)c-\frac{4}{9}\left(1-\left(1-\frac{9c}{4}\right)\right)^2\right)n^2
= \frac{9}{4}c^2n^2, \notag
\end{align}
a contradiction.
\end{proof}

Now we are ready to prove Theorem \ref{thm-sparse-half-regular-K4-sparse}.

\begin{proof}[Proof of Theorem \ref{thm-sparse-half-regular-K4-sparse}]
Let $c = e(G)/n^2$ and let $\lambda = 8/13$.
By Lemmas \ref{lemma-max-cut-general-graph} and \ref{lemma-max-cut-K4}, there exists a partition $A \cup B = V(G)$
such that
\begin{align}
e(A,B)
& \ge \left(1-\lambda\right)\left(\frac{1}{n}\sum_{v\in V(G)}\left(d(v)\right)^2 - \frac{2}{n}\sum_{v\in V(G)}e\left(N(v)\right)\right) \notag\\
& \quad +\lambda\left(\frac{e(G)}{2} + \frac{1}{n} \sum_{v\in V(G)}\left(\frac{4\left(e(N(v))\right)^2}{\left(d(v)\right)^2}-\frac{e(N(v))}{2}\right)\right) \notag\\
& = \frac{\lambda}{2}e(G) + \frac{4\lambda}{n}\sum_{v\in V(G)}\left(d(v)\right)^2\left(\left(\frac{e\left(N(v)\right)}{\left(d(v)\right)^2}\right)^2
  -\frac{2-3\lambda/2}{4\lambda}\frac{e\left(N(v)\right)}{\left(d(v)\right)^2}+\frac{1-\lambda}{4\lambda}\right). \notag
\end{align}
Since
\begin{align}
x^2 - \frac{2-3\lambda/2}{4\lambda}x + \frac{1-\lambda}{4\lambda}
\ge \frac{88\lambda-73\lambda^2-16}{256\lambda^2}, \notag
\end{align}
we obtain
\begin{align}
e(A,B)
& \ge \frac{\lambda}{2}e(G)+\frac{88\lambda-73\lambda^2-16}{64\lambda} \sum_{v\in V(G)}\frac{d(v)^2}{n} \notag \\
& \ge \frac{\lambda}{2}e(G)+\frac{88\lambda-73\lambda^2-16}{64\lambda}\sum_{v\in V(G)} \left(\frac{\sum_{v\in V(G)}d(v)}{n}\right)^2 \notag\\
%& = \frac{\lambda}{2}e(G)+\frac{88\lambda-73\lambda^2-16}{64\lambda} \left(\frac{2e(G)}{n}\right)^2 \notag\\
& = \left(\frac{\lambda}{2} c +  \frac{88\lambda-73\lambda^2-16}{16\lambda} c^2\right) n^2
= \left(\frac{4}{13}c+\frac{111}{104}c^2\right)n^2. \notag
\end{align}
Since $\frac{4}{13}c+\frac{111}{104}c^2 > \frac{9}{4}c^2$ holds for all $c \in (0,\frac{32}{123})$ and $\frac{32}{123}>0.26$,
we derive that $e(A,B) > \frac{9}{4}c^2 n^2$ whenever $c\leq 0.26$.
%Since $\frac{4}{13}c+\frac{111}{104}c^2 > \frac{9}{4}c^2$ for all $c \in (0,\frac{32}{123})$,
%$e(A,B) > \frac{9}{4}c^2 n^2$ for all $c \in (0,\frac{32}{123})$.
Therefore, by Lemma \ref{lemma-upper-bound-max-cut-K4},
$G$ contains a vertex set of size $n/2$ that spans strictly less than $n^2/18$ edges.
\end{proof}

\subsection{Dense range}
In this section we prove the following for graphs with high minimum degree.

\begin{thm}\label{thm-case-dense-K4}
%Let $n\in \mathbb{N}$ be even.
Suppose that $G$ is a $K_4$-free graph on $n$ vertices with $\delta(G) \ge 0.59n$.
Then $G$ contains a vertex set of size $\lf n/2 \rf$ that spans at most $n^2/18$ edges.
Moreover, if every vertex set of size $\lf n/2 \rf$ in $G$ spans at least $n^2/18$ edges,
then $G\cong T_{3}(n)$.
\end{thm}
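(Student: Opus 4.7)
The plan is to combine Lyle's structural theorem with a short probabilistic argument. First, invoke Lyle's theorem \cite{L14}: since $\delta(G) \ge 0.59n$, the graph $G$ is either $3$-chromatic or a blow-up of one of finitely many specific small $K_4$-free graphs appearing in Lyle's dichotomy. In each exceptional case the structure of $G$ is completely determined by a small graph $F$, and a direct computation on the blow-up exhibits a sparse half with strictly fewer than $n^2/18$ edges, so equality is impossible there. It therefore suffices to handle the main case in which $G$ is $3$-chromatic.

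Write $V(G) = V_1 \cup V_2 \cup V_3$ with $a_i := |V_i|$, $a_1 \le a_2 \le a_3$, and let $e_{ij} := e(V_i, V_j) \le a_i a_j$. Since $N(v) \subseteq V(G) \setminus V_i$ for $v \in V_i$, the min-degree condition gives $a_i \le 0.41 n$ for every $i$; hence $a_1 \ge 0.18 n$ and $a_3 \in [n/3,\, 0.41 n]$. Sample $X \subseteq V_2$ uniformly at random with $|X| = n/2 - a_3$ (feasible, since $0 \le n/2 - a_3 \le a_2$) and set $S := V_3 \cup X$, so $|S| = n/2$. Because $V_2$ and $V_3$ are independent,
\begin{align*}
\mathbb{E}\bigl[e(S)\bigr] \;=\; \mathbb{E}\bigl[e(V_3, X)\bigr] \;=\; \frac{n/2 - a_3}{a_2}\, e_{23} \;\le\; \left(\frac{n}{2} - a_3\right) a_3 \;\le\; \frac{n^2}{18},
\end{align*}
where the last inequality follows from $18 a_3^2 - 9 n a_3 + n^2 \ge 0$ on $[n/3,\, n/2]$ (the roots of this quadratic being $n/6$ and $n/3$). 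Hence some half $S$ satisfies $e(S) \le n^2/18$, proving the first assertion.

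For the moreover part, suppose every half of $G$ spans at least $n^2/18$ edges. Then the displayed chain is everywhere tight: $(n/2 - a_3) a_3 = n^2/18$ forces $a_3 = n/3$, hence $a_1 = a_2 = a_3 = n/3$, and $e_{23} = a_2 a_3$ forces the bipartite graph between $V_2$ and $V_3$ to be complete. Rerunning the identical argument with the pair $(V_1, V_3)$ (sample $X \subseteq V_1$ of size $n/6$ and take $S := V_3 \cup X$) and with $(V_1, V_2)$ gives $e_{13} = a_1 a_3$ and $e_{12} = a_1 a_2$, so every bipartite piece is complete and $G \cong K_{n/3, n/3, n/3} = T_3(n)$. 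The main obstacle I anticipate lies in the first paragraph: one has to carefully enumerate the exceptional structures delivered by Lyle's theorem and verify the sparse-half bound inside each of their blow-ups, so that the equality characterization is genuinely confined to $T_3(n)$. Once the reduction to the $3$-chromatic case is in hand, the remaining work is a clean combination of random sampling and the quadratic inequality $(n/2 - a_3)\, a_3 \le n^2/18$.
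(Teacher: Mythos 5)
There is a genuine gap, and it sits exactly where you flagged your ``main obstacle'': the structural input in your first paragraph is not available. Lyle's theorem, as used in the paper, says that a \emph{maximal} $K_4$-free graph with $\delta(G)\ge 4n/7$ either contains an independent set of size at least $4\delta(G)-2n$ or is the join of an independent set with a triangle-free graph. It does \emph{not} say that $G$ is $3$-chromatic or a blow-up of one of finitely many small graphs, and no such dichotomy can hold at minimum degree $0.59n$: the chromatic threshold of $K_4$ is $3/5$, so for every $c<3/5$ there are $K_4$-free graphs with $\delta(G)\ge cn$ and arbitrarily large chromatic number. Since $0.59<3/5$, such graphs are neither $3$-colorable nor blow-ups of any fixed finite family, so the ``main case'' of your proof simply does not cover them. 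Even the join case of the correct dichotomy escapes your argument: the triangle-free factor can be (say) a blow-up of $C_5$ or the Petersen graph, so $G$ need not be $3$-partite, and handling it requires Krivelevich's local-density theorem for triangle-free graphs --- including its equality version, which is what lets the paper pin down the triangle-free part as $T_2(2n/3)$ in the extremal case. The large-independent-set branch also needs its own (short, probabilistic) argument.

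Your second and third paragraphs are correct as far as they go: if a proper $3$-coloring $V_1\cup V_2\cup V_3$ is given, then $a_i\le 0.41n$, the sampling bound
\begin{align}
\mathbb{E}\bigl[e(V_3\cup X)\bigr]\le \Bigl(\tfrac{n}{2}-a_3\Bigr)a_3\le \tfrac{n^2}{18}
\notag
\end{align}
is valid because $18t^2-9t+1\ge 0$ for $t\ge 1/3$, and the equality analysis correctly forces $a_1=a_2=a_3=n/3$ with all bipartite pieces complete. But this only disposes of $3$-partite graphs, which is a small fragment of the theorem; the reduction to that case is the actual content, and it cannot be obtained from the structural result you cite.
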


To show this, we need a structural result on dense $K_4$-free graphs.
A $K_{r}$-free graph $G$ is {\it maximal} if adding any new edge to $G$ will result in a copy of $K_r$.
Let $G_1$ and $G_2$ be two vertex disjoint graphs.
The {\it join} of $G_1$ and $G_2$, denoted by $G_1 \vee G_2$, is a graph with $V(G_1 \vee G_2) = V(G_1) \cup V(G_2)$
and
\begin{align}
E(G_1 \vee G_2) & = E(G_1) \cup E(G_2) \cup \{uv: u \in V(G_1), v\in V(G_2)\}.\notag
\end{align}

\begin{thm}[Lyle, \cite{L14}]\label{thm-Lyle-structure-K4}
Let $G$ be a maximal $K_4$-free on $n$ vertices with $\delta(G)\ge 4n/7$.
Then either $G$ contains an independent set of size at least $4\delta(G)-2n$
or $G$ is the join of an independent set and a triangle-free graph.
\end{thm}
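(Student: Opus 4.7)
Proof plan. The plan is to establish the contrapositive of the dichotomy: assuming $G$ has no independent set of size $4\delta(G)-2n$, deduce that $G$ is the join of an independent set $A$ and a triangle-free graph $G[V(G)\setminus A]$.

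I would begin by extracting two structural constraints from the hypotheses. Since $G$ is $K_4$-free, for every edge $uv\in E(G)$ the common neighborhood $N(u)\cap N(v)$ is an independent set (any edge inside would close a $K_4$ together with $uv$). By inclusion--exclusion, $|N(u)\cap N(v)|\ge 2\delta-n\ge n/7$, so every edge yields an independent set of size at least $n/7$. Under the working assumption this independent set has size less than $4\delta-2n$, so
\[
2\delta-n \;\le\; |N(u)\cap N(v)| \;<\; 2(2\delta-n) \quad \text{for every edge } uv.
\]
Since $G$ is \emph{maximal} $K_4$-free, for every non-edge $uv$ the set $N(u)\cap N(v)$ contains at least one edge, for otherwise adding $uv$ would keep $G$ $K_4$-free.

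Next I would propose a candidate apex set, for instance $A:=\{v\in V(G) : V(G)\setminus(N(v)\cup\{v\})\text{ is an independent set}\}$, i.e., the set of vertices whose non-neighborhood is independent. In the target join structure this coincides with the independent part. The plan is to verify (i) $A$ is itself independent, (ii) every $v\in A$ is adjacent to every $w\in V(G)\setminus A$, and (iii) $G[V(G)\setminus A]$ is triangle-free. Item (iii) is quick: if $x,y,z$ form a triangle in $V(G)\setminus A$, then by (ii) any $v\in A$ is adjacent to all three and $\{v,x,y,z\}$ induces $K_4$, a contradiction. Item (ii) should follow from the defining property of $A$ together with the observation that a non-neighbor of $v\in A$ outside $A$ would violate either independence of the non-neighborhood or membership in $A$.

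The main obstacle is item (i) — showing $A$ is nonempty and independent. I would attack this by selecting two non-adjacent vertices $u,v$ with $|N(u)\cap N(v)|$ maximized and arguing via maximality that they must behave like twins in the sense $N(u)=N(v)$. The engine is that if $u,v$ are non-adjacent then some edge $xy\subseteq N(u)\cap N(v)$ exists; if $u$ had a neighbor $w\notin N(v)$, then the edge-link of $uw$ (also independent, of size $\ge 2\delta-n$) could be combined with $N(u)\cap N(v)$ to assemble an independent set exceeding the $4\delta-2n$ threshold, contradicting the working assumption. Propagating this twin relation should populate $A$ and simultaneously enforce the join condition. The delicate calculation will hinge on the tightness of $4\delta-2n=2(2\delta-n)$, which is precisely the threshold at which one can no longer pack two disjoint edge-link independent sets into $V(G)$; this is exactly where the hypothesis $\delta(G)\ge 4n/7$ enters, since it is needed to make $2\delta-n\ge n/7$ large enough for the packing argument to bite.
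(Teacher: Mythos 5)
A preliminary remark: the paper does not prove this statement at all --- it is quoted as a black box from Lyle \cite{L14} and used only as an ingredient in the proof of Theorem~\ref{thm-case-dense-K4}. So your attempt must stand on its own, and as written it is an outline whose decisive steps are either missing or fail on concrete examples.

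There are two genuine gaps. First, your candidate set $A=\{v: V(G)\setminus(N(v)\cup\{v\})\ \text{is independent}\}$ is the wrong object: for $G=T_3(n)$ (which is maximal $K_4$-free with $\delta=2n/3>4n/7$, has independence number only $n/3<4\delta-2n=2n/3$, and hence must land in the ``join'' branch of the dichotomy) every vertex has independent non-neighbourhood, so $A=V(G)$ and your item (i) is false on the canonical example. Item (ii) also does not follow from the definition: if $v\in A$ and $w$ is a non-neighbour of $v$, then $w$ lies in the independent non-neighbourhood of $v$, but nothing forces $w$'s \emph{own} non-neighbourhood to be independent, so you cannot conclude $w\in A$, and with (ii) gone item (iii) collapses as well. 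Second, the engine you propose for the heart of the theorem is broken. For a \emph{non}-edge $uv$ of a maximal $K_4$-free graph the set $N(u)\cap N(v)$ is not independent --- maximality forces it to \emph{contain} an edge --- so ``combining'' it with the (genuinely independent) link $N(u)\cap N(w)$ of an edge cannot directly produce an independent set; and even for two edge-links, the union of two independent sets of size $2\delta-n$ has size $4\delta-2n$ and is independent only if the two sets are disjoint and have no edges between them, neither of which your plan addresses. That packing/structure step is precisely the content of Lyle's theorem, and it is exactly the part you leave as ``should populate $A$''; the twin-propagation sketch does not fill it.
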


Our next lemma shows that if a $K_4$-free graph $G$ contains a large induced triangle-free graph,
then $G$ contains a sparse half.

\begin{lemma}\label{lemma-sparse-half-K4-with-large-K3}
Let $G$ be a $K_4$-free graph on $n$ vertices. Suppose that $G$ contains an induced triangle-free subgraph $\Gamma$ with at least $2n/3$ vertices.
Then $G$ contains a vertex set of size $n/2$ that spans at most $n^2/18$ edges.
Moreover, if $|V(\Gamma)|>2n/3$,
then $G$ contains a vertex set of size $n/2$ which spans strictly less than $n^2/18$ edges.
\end{lemma}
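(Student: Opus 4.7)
The plan is to locate the sparse half $S$ entirely inside $V(\Gamma)$, by combining Theorem~\ref{thm-local-density-K3} (to extract a low-density subgraph of $\Gamma$) with Proposition~\ref{prop-1} (to pass from that subgraph down to a set of size exactly $n/2$).

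Set $m = |V(\Gamma)|$ and write $e(\Gamma) = c m^{2}$, so that $c \in [0, 1/4]$ by Mantel's theorem. Apply Theorem~\ref{thm-local-density-K3}(1) to the triangle-free graph $\Gamma$ with parameter $\alpha = 1-c$; the hypothesis $\alpha + c = 1$ is satisfied automatically, producing $T \subseteq V(\Gamma)$ with $|T| = (1-c)m$ and $e(G[T]) \le (1-2c)\, c\, m^{2}$. Since $m \ge 2n/3$ and $c \le 1/4$ together give $|T| \ge (3/4)(2n/3) = n/2$, Proposition~\ref{prop-1} applies to $G[T]$ with ratio $\alpha_{0} = n/(2|T|) \le 1$ and yields $S \subseteq T$ of size $n/2$ satisfying
\[
e(S) \le \alpha_{0}^{2}\, e(G[T]) \le \frac{c(1-2c)\, n^{2}}{4(1-c)^{2}}.
\]
The first assertion then reduces to the elementary inequality $\frac{c(1-2c)}{(1-c)^{2}} \le \frac{2}{9}$ on $[0, 1/4]$, which after clearing denominators is equivalent to $(4c-1)(5c-2) \ge 0$ and holds with equality exactly at $c = 1/4$.

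For the strict version under $m > 2n/3$, observe that $|T| \ge 3m/4 > n/2$ forces $\alpha_{0} < 1$. The probability estimate inside the proof of Proposition~\ref{prop-1} is then strict, namely $\frac{\alpha_{0}|T|(\alpha_{0}|T|-1)}{|T|(|T|-1)} < \alpha_{0}^{2}$ whenever $\alpha_{0} < 1$, which upgrades the averaging step to $e(S) < \alpha_{0}^{2}\, e(G[T]) \le n^{2}/18$ whenever $e(G[T]) > 0$; the remaining trivial case $e(G[T]) = 0$ gives $e(S) = 0$ directly. The main obstacle is identifying that $\alpha = 1 - c$ is precisely the right parameter for Theorem~\ref{thm-local-density-K3}: any larger $\alpha$ violates the hypothesis $\alpha + c \ge 1$, while any smaller $\alpha$ would yield $|T| < n/2$ and leave no room for the subsequent reduction via Proposition~\ref{prop-1}. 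This boundary choice threads the needle, and the resulting bound is tight exactly when $c = 1/4$ and $m = 2n/3$, matching the configuration coming from the Tur\'{a}n graph $T_{3}(n)$.
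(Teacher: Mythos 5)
Your argument is correct, but it takes a different route from the paper's. The paper bounds $e(\Gamma')$ in one shot: after truncating $\Gamma$ to at most $5n/6$ vertices so that $\alpha=1/(2x)\ge 3/5$, it invokes Krivelevich's local density theorem for triangle-free graphs to extract a set of size exactly $n/2$ spanning at most $\frac{(1-x)x}{4}n^2\le n^2/18$ edges. You instead do a two-step extraction entirely with tools internal to the paper: Theorem~\ref{thm-local-density-K3}(1) at the boundary parameter $\alpha=1-c$ produces $T$ with $|T|=(1-c)m\ge n/2$ and $e(T)\le (1-2c)cm^2$, and Proposition~\ref{prop-1} then shrinks $T$ to size $n/2$, giving $e(S)\le \frac{c(1-2c)}{4(1-c)^2}n^2$, which is at most $n^2/18$ since $(4c-1)(5c-2)\ge 0$ on $[0,1/4]$. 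Your version avoids both the external Krivelevich bound and the $5n/6$ truncation, at the cost of the extra elementary inequality; your strictness argument (strict inequality in the hypergeometric probability when $\alpha_0<1$, plus the trivial case $e(G[T])=0$) is also sound, and is arguably cleaner than the paper's observation that $(1-x)x<2/9$ for $x>2/3$. One small slip in your closing commentary: the constraint $\alpha+c\ge 1$ is violated by \emph{smaller} $\alpha$, not larger; $\alpha=1-c$ is the smallest admissible choice, and it is optimal because $(2\alpha-1)/\alpha^2$ is increasing on $(0,1]$, so any larger (still admissible) $\alpha$ would only weaken the final bound. This does not affect the validity of the proof, which uses only the admissible value $\alpha=1-c$.
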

\begin{proof}
Let $A \subset V(G)$ such that $\Gamma = G[A]$ and let $x = |A|/n$.
We may assume that $x \le 5/6$ since otherwise we could choose $A'\subset A$ with $|A'| = 5n/6$
and consider $G[A']$ instead.
Let $\alpha = 1/(2x)$. Then $\alpha \ge 3/5$.
By a result of Krivelevich on triangle-free graphs \cite{K95},
there exists $T \subset A$ with $|T| =\alpha |A|= n/2$ such that
\begin{align}
e(T)
\le \frac{2\times \frac{1}{2x}-1}{4}|A|^2
= \frac{(1-x)x}{4}n^2
\le  \frac{n^2}{18}, \notag
\end{align}
where in the last inequality we used the assumption that $x \ge 2/3$.
Notice that if $x>2/3$, then the inequality above is strict.
This proves the lemma.
%which implies that if $|V(\Gamma)|>2n/3$,
%then $G$ contains a vertex set of size $n/2$ which spans strictly less than $n^2/18$ edges.
\end{proof}

We also need the following slightly stronger version of Krivelevich's theorem on local densities of triangle-free graphs.
A proof is included in the appendix, which follows from a detailed analysis of Krivelevich's proof in \cite{K95}
as well as the proof of Erd\H{o}s et al. in \cite{EFRS94}.

\begin{thm}[Krivelevich, \cite{K95}] \label{obsevation-triangle-free-Kriveleich}
Let $3/5< \alpha \le 1$, $n\in \mathbb{N}$ and $\alpha n \in \mathbb{N}$.
Let $G$ be a triangle-free graph on $n$ vertices.
If every vertex set of size $\alpha n$ in $G$ spans at least $\frac{2\alpha -1}{4}n^2$ edges,
then $G \cong T_2(n)$.
\end{thm}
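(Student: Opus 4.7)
The plan is to reproduce Krivelevich's original proof in \cite{K95} (and the related arguments in \cite{EFRS94}), tracking the equality cases, with the goal of showing the hypothesis forces $e(G)=\lfloor n^2/4\rfloor$. Mantel's extremal theorem then yields $G\cong T_2(n)$ immediately. The structure mirrors the two-case proof of Theorem~\ref{thm-local-density-K3} in Section~3.

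Set $c:=e(G)/n^2\le 1/4$ (Mantel), so the hypothesis $e(S)\ge(2\alpha-1)n^2/4\ge(2\alpha-1)cn^2$ feeds directly into the Case~1 Cauchy--Schwarz argument from the proof of Theorem~\ref{thm-local-density-K3}. In the subcase $\delta(G)\ge(1-\alpha)n$, the equality case of that argument forces $G$ to be $2cn$-regular. A direct degree count (as in the proof of Proposition~\ref{prop-regular-triangle-free-graph}) gives $e(S)=(2\alpha-1)cn^2+e(V\setminus S)$ for any $S$ with $|S|=\alpha n$, so the hypothesis translates to
\[
e(T)\ge(2\alpha-1)\left(\tfrac14-c\right)n^2 \quad \text{for every } T \text{ with } |T|=(1-\alpha)n.
\]
I would then use a probabilistic averaging sharper than Proposition~\ref{prop-1} that exploits the independence of neighborhoods (taking, for example, $T\subseteq V\setminus N(v)$ for a suitable vertex $v$) to show this uniform lower bound on small sets is incompatible with $c<1/4$, forcing $c=1/4$ and hence $G\cong T_2(n)$ via Mantel's equality case. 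In the complementary subcase $\delta(G)<(1-\alpha)n$, the iterative low-degree removal used in the second case of Theorem~\ref{thm-local-density-K3} produces an $\alpha n$-subset whose edge count is strictly less than $(2\alpha-1)cn^2\le(2\alpha-1)n^2/4$ (the strictness coming from the estimate $e(A)+e(A,B)<k(1-\alpha)n^2$ once any vertex has been removed), directly contradicting the hypothesis.

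The main obstacle will be the range $3/5<\alpha<3/4$, in which $\alpha+c<1$ is possible and Theorem~\ref{thm-local-density-K3}(1) cannot be invoked as a black box. In this regime, both the closing inequality of the iterative Case~2 argument and the sharpened probabilistic bound needed in Case~1 must be re-derived by re-opening Krivelevich's original averaging and upgrading each appearance of an inequality to a strict one whenever $G\not\cong T_2(n)$. This is precisely the step that pins down the threshold $3/5$ (distinguishing it from the weaker conjectured threshold $17/30$ of Erd\H{o}s--Faudree--Rousseau--Schelp in \cite{EFRS94}), and it is where I expect the bulk of the appendix-level work to lie.
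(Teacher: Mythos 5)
Your first subcase ($\delta(G)\ge(1-\alpha)n$) does work, and more cleanly than you describe: the equality analysis of Theorem~\ref{thm-local-density-K3} forces $G$ to be $2cn$-regular, any neighborhood $N(v)$ is then an independent set of size $2cn\ge(1-\alpha)n$, and taking $T\subseteq N(v)$ (not $T\subseteq V\setminus N(v)$ as you wrote) with $|T|=(1-\alpha)n$ gives $e(T)=0$; the regular-graph identity $e(V\setminus T)=(2\alpha-1)cn^2+e(T)$ then forces $c\ge 1/4$, and Mantel's uniqueness finishes. No ``sharper probabilistic averaging'' is needed there. The genuine gap is the complementary subcase, which you have located but misdiagnosed. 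When $\delta(G)<(1-\alpha)n$, the iterative-removal bound gives $e(A\cup S)<\bigl((2\tilde\alpha-1)c+2(1-\tilde\alpha)(1-\alpha)k\bigr)n^2=\frac{(2\alpha-1-k)c+2k(1-\alpha)^2}{1-k}\,n^2$, and comparing this with the target $\frac{2\alpha-1}{4}n^2$ using $c\le 1/4$ reduces to the condition $4(1-\alpha)<1$, i.e.\ $\alpha>3/4$. So for $3/5<\alpha<3/4$ the closing inequality points the \emph{wrong way}: this is not a matter of upgrading non-strict inequalities to strict ones, and no amount of equality tracking within this scheme repairs a reversed inequality. (Even at $\alpha=3/4$, the only value the paper actually needs in Section~4.2, the comparison is borderline and fails when the removed set is large.) Deferring this to ``re-opening Krivelevich's averaging'' defers exactly the part of the proof that contains all the difficulty, since Krivelevich's argument for $3/5\le\alpha<3/4$ is not the low-degree-removal argument at all.

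For comparison, the paper's proof uses a different dichotomy and a different second half, neither of which appears in your plan. It splits on whether $G$ contains an independent set of size $(1-\alpha)n$. If not, it cites Krivelevich's proof of his Theorem~4 for the existence of an $\alpha n$-set spanning \emph{strictly} fewer than $\frac{2\alpha-1}{4}n^2$ edges. If so, it runs the Erd\H{o}s--Faudree--Rousseau--Schelp argument: Lemma~\ref{lemma-triangle-large-matching} produces a matching $M$ of size $(2\alpha-1)n/2$ outside the independent set $A$; triangle-freeness forces each vertex of $A$ to meet at most half of $V(M)$, so $e(A\cup V(M))\le\frac{2\alpha-1}{4}n^2$ and the hypothesis forces equality, pinning down $G[V(M)]$ as a balanced complete bipartite graph with every vertex of $A$ attached to exactly one side; a final counting argument using $\alpha>3/5$ eliminates vertices outside the resulting bipartition, and Lemma~\ref{lemma-bipartite-local-density} yields $T_2(n)$. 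You would need either to reproduce this matching-based structural analysis or to genuinely re-prove (with equality cases) the hard half of Krivelevich's Theorem~4; as written, the proposal contains neither.
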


Now we are ready to prove Theorem \ref{thm-case-dense-K4}.

\begin{proof}[Proof of Theorem \ref{thm-case-dense-K4}]
It is clear that to prove Theorem \ref{thm-case-dense-K4}, it suffices to consider maximal $K_4$-free graphs.
Let $G$ be a maximal $K_4$-free graph on $n$ vertices with $\delta(G) \ge 0.59n > 4n/7$.
Then by Theorem \ref{thm-Lyle-structure-K4}, either $G$ is the join of an independent set and a triangle-free graph
or $G$ contains an independent set of size at least $4\delta(G)-2n$.

First, suppose that the former case occurs, that is, $G$ is the join of an independent set $I$ and a triangle-free graph $\Gamma$.
Let $\alpha = |V(\Gamma)|/n$. So $|I| = (1-\alpha)n$.
We may assume that $\alpha > 1/2$ since otherwise we can simply choose a subset of $I$ with size $n/2$ which spans none of edges.
On the other hand, if $\alpha > 2/3$, then by Lemma \ref{lemma-sparse-half-K4-with-large-K3}, we are done.
So we may assume that $1/2< \alpha \le 2/3$.

Let $c = e(\Gamma)/(\alpha n)^2$.
If $c < 2/9$, then by Proposition \ref{prop-1}, there exists $S \subset V(\Gamma)$ with $|S| = n/2$ such that
\begin{align}
e(S) \le \left(\frac{1/2}{\alpha}\right)^2 c(\alpha n)^2 = \frac{1}{4} cn^2 < \frac{n^2}{18}. \notag
\end{align}
So we may assume that $c \ge 2/9$.
Since $\Gamma$ has at least $2\alpha^2n^2/9$ edges,
there exists some $v \in V(\Gamma)$ such that $d_{\Gamma}(v) \ge 4\alpha^2 n/9 \ge (\alpha - 1/2)n$,
where the last inequality holds as $1/2< \alpha \le 2/3$.
Let $T \subset N_{\Gamma}(v)$ be any subset with $|T| = (\alpha - 1/2)n$.
Since $\Gamma$ is triangle-free, $T$ is an independent set.
Therefore, $I\cup T$ has size $n/2$ and satisfies
\begin{align}
e(I\cup T) \le \left(1-\alpha\right)\left(\alpha-\frac{1}{2}\right)n^2 \le \frac{n^2}{18}, \notag
\end{align}
where the last inequality uses the assumption that $\alpha \le 2/3$.
Notice that if $\alpha < 2/3$, then the inequality above is strict.

Now we may assume that $G$ contains an independent set $A$ whose size is at least $4\delta(G)-2n \ge 9n/25$.
We may just take $A$ such that $|A| = 9n/25$. Let $B = V(G)\setminus A$.
By Proposition~\ref{prop-2}, there exists $U\subset B$ with $|U| = 7n/50$ such that
\begin{align}
e(A\cup U) &\le \frac{7/50}{16/25}e(A,B)+\left(\frac{7/50}{16/25}\right)^2 e(B)
= \frac{175}{1024}e(A,B) + \frac{49}{1024}(e(A,B)+e(B)) \notag\\
& \le \frac{175}{1024}\left(\frac{9}{25}n\times \frac{16}{25}n\right) + \frac{49}{1024} \times \frac{n^2}{3}
= \frac{4249}{76800} n^2
< \frac{n^2}{18}. \notag
\end{align}
Therefore, $A\cup U$ is a sparse half with $e(A\cup U)<n^2/18$.

From the arguments above, one could see that
if every vertex set of size $n/2$ in $G$ spans at least $n^2/18$ edges,
then $G$ must be the join of a triangle-free graph $\Gamma$ and an independent set $I$ with $|V(\Gamma)|=2n/3$.
Since every vertex set of size $n/2=\frac34|V(\Gamma)|$ in $\Gamma$ spans at least $n^2/18 = (2\cdot 3/4 - 1)|V(\Gamma)|^2/4$ edges,
by Theorem~\ref{obsevation-triangle-free-Kriveleich} we have $\Gamma \cong T_{2}(2n/3)$, which implies $G\cong T_{3}(n)$.
This finishes the proof of Theorem \ref{thm-case-dense-K4}.
\end{proof}

\subsection{Intermediate range}
In this section we will prove the following result for regular graphs.

\begin{thm}\label{thm-case-medium-K4}
%Let $n\in \mathbb{N}$ be even.
Every $K_4$-free regular graph $G$ on $n$ vertices with $e(G)\le 0.297n^2$
contains a vertex set of size $\lf n/2 \rf$ that spans strictly less than $n^2/18$ edges.
\end{thm}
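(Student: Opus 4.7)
The plan is to argue by contradiction. Assume $G$ is a $d$-regular $K_4$-free graph on $n$ vertices with $e(G)=cn^2$ and $c\in(0.26,\,0.297]$, and that every subset of $\lfloor n/2\rfloor$ vertices spans at least $n^2/18$ edges; by regularity $d=2cn$. Following the outline given just before this theorem, the strategy is first to extract three large pairwise disjoint independent sets $I_1,I_2,I_3\subseteq V(G)$ using Theorem~\ref{thm-local-density-K3}, and then to derive a contradiction by a probabilistic sparse-half construction built on top of them. The key observation for Stage~1 is that, since $G$ is $K_4$-free, every neighborhood $G[N(v)]$ is triangle-free on $d$ vertices, with at most $d^2/4$ edges by Mantel's theorem. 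Applying Theorem~\ref{thm-local-density-K3}(1) inside $N(v)$ with a carefully chosen parameter yields a large subset of $N(v)$ whose induced density lies strictly below the triangle-free extremal density; by averaging over $v$ and combining with a Tur\'an-type refinement, this produces a genuine independent set $I_1\subseteq V(G)$ of linear size. Iterating in $G-I_1$ and in $G-(I_1\cup I_2)$, each of which remains $K_4$-free with triangle-free neighborhoods, then yields $I_2$ and $I_3$; the regularity characterization in Theorem~\ref{thm-local-density-K3}(2) should be used at each step to force the sizes $|I_i|$ to be large enough, or else to force the graph to already be $T_3$-like (which lies outside our density range and gives an immediate contradiction).

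For Stage~2, write $|I_i|=\beta_i n$ and $R=V(G)\setminus(I_1\cup I_2\cup I_3)$, and sample a subset $S\subseteq V(G)$ of size $n/2$ by taking a $\lambda_i$-fraction of each $I_i$ and a $\mu$-fraction of $R$, subject to the size constraint $\sum_i\beta_i\lambda_i+(1-\sum_i\beta_i)\mu=\tfrac12$. Since each $I_i$ is independent, the only contributions to $\mathbb{E}[e(S)]$ come from cross-edges $e(I_i,I_j)$, from $e(I_i,R)$, and from internal edges $e(R)$, each of which is controlled in terms of $c$ and the $\beta_i$ via regularity and a degree-sum argument on $R$. Optimizing over $(\lambda_1,\lambda_2,\lambda_3,\mu)$ under the size constraint yields an explicit upper bound on $\mathbb{E}[e(S)]$; the objective is to verify that for every $c\in(0.26,\,0.297]$ this bound is strictly less than $n^2/18$, contradicting the hypothesis.

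The main obstacle, I expect, is Stage~1: guaranteeing that $\beta_1+\beta_2+\beta_3$ is close enough to $1$ that the probabilistic optimization in Stage~2 closes the gap to $n^2/18$. Because the endpoints $0.26$ and $0.297$ are chosen precisely to glue onto Theorems~\ref{thm-case-sparse-K4} and~\ref{thm-case-dense-K4}, the constants must be tracked very tightly throughout, and any slack in the linear lower bound on $|I_i|$ coming from Theorem~\ref{thm-local-density-K3} would prevent the argument from closing. The whole point of establishing Theorem~\ref{thm-local-density-K3} --- which strictly improves Krivelevich's $(2\alpha-1)n^2/4$ bound to $(2\alpha-1)cn^2$ in the relevant regime --- is precisely to supply the extra room in Stage~1 needed to make Stage~2 work.
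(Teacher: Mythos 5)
Your high-level plan (three disjoint independent sets, then a probabilistic sparse-half construction) matches the paper's outline, but both stages have genuine gaps as described.

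\textbf{Stage 1.} The mechanism you propose for extracting the independent sets does not work. Applying Theorem~\ref{thm-local-density-K3}(1) inside $N(v)$ produces a subset of $N(v)$ of \emph{low edge density}, not an independent set, and no amount of ``averaging over $v$'' or ``Tur\'an-type refinement'' converts a linear-size sparse set into a linear-size independent set. Moreover, iterating in $G-I_1$ destroys regularity and the minimum-degree bounds the argument needs. The paper's actual route is different and uses $K_4$-freeness in an essential structural way: for any edge $uv$ the common neighborhood $N(uv)$ is an independent set, and for a triangle $uvw$ the three sets $N(uv),N(vw),N(wu)$ are pairwise disjoint. Theorem~\ref{thm-local-density-K3} is used only to prove that each $e(N(v))$ is large (giving $t(G)\ge \frac{c}{27(1-2c)}n^3$, Lemma~\ref{lemma-many-K3}), and then Sudakov's Lemma~\ref{lemma-three-independent-sets} supplies a triangle with $d(uv)+d(vw)+d(wu)\ge 9t(G)/e(G)$, which yields the three independent sets with $|V_1|+|V_2|+|V_3|\ge \frac{n}{3(1-2c)}$. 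This step is absent from your proposal.

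\textbf{Stage 2.} A single randomized scheme (a $\lambda_i$-fraction of each $I_i$ and a $\mu$-fraction of $R$, optimized) is not what the paper does, and it is unlikely to close the numerical gap: the expectation of one such scheme only sees a fixed linear combination of the quantities $e_{ij}$, $e(I_i,R)$, $e(R)$, whereas the paper runs the argument in the opposite direction. It uses the hypothesis (every half spans $\ge n^2/18$ edges) against \emph{many} different schemes --- six to eight per case, listed in Tables~1--4, split into four cases according to which sums $x_i+x_j$ exceed $1/2$ --- to extract separate lower bounds on the individual $e_{ij}$'s; these are then combined with the regularity identity $e_{14}+e_{24}+e_{34}+2e_4=2c(1-g(c))n^2$ (the one place regularity is used) and Tur\'an's bound $e_4\le |V_4|^2/3$, and the resulting inequalities are verified by computer (Claims~\ref{claim-case3} and~\ref{claim-case4}). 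Your proposal neither identifies this case structure nor verifies any inequality, and you yourself flag that the constants must be tracked tightly; as written, the argument does not close.
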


We would like to remind the reader that the assumption that $G$ is regular in Theorem \ref{thm-case-medium-K4}
can be replaced by $\Delta(G)-\delta(G) \le \epsilon n$ for some absolute (but small) constant $\epsilon>0$.
However, in order to keep the proof simple we shall only consider regular graphs.

The proof ideas are as follows.
First, under the assumption that all subsets of size $n/2$ span at least $n^2/18$ edges,
we show that $G$ must contain many triangles.
Then we show that there exists a partition $V(G) = V_1 \cup V_2 \cup V_3 \cup V_4$ such that $V_1,V_2,V_3$ are independent sets
and $|V_1|+|V_2|+|V_2|$ is relatively large.
Finally, utilizing this partition, we employ some ad hoc probabilistic arguments to find a sparse half and thus reach a contradiction.

Recall that $t(G)$ denotes the number of triangles in $G$.

\begin{lemma}\label{lemma-many-K3}
Let
%$n\in \mathbb{N}$ be even and
$G$ be an $n$-vertex $K_4$-free graph with $cn^2$ edges and $n/2 \le \delta(G) \le \Delta(G) \le 9n/14$.
Suppose that every vertex set of size $\lf n/2 \rf$ in $G$ spans at least $n^2/18$ edges.
Then we have $t(G) \ge \frac{c}{27(1-2c)}n^3.$
\end{lemma}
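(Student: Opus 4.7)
The plan is to lower bound $t(G)$ by analyzing $t_v := e(N(v))$ for each vertex $v$ and then summing. Since $G$ is $K_4$-free, every induced neighborhood $G[N(v)]$ is triangle-free; since $d(v)\ge n/2$, any subset of $N(v)$ of size $\lfloor n/2\rfloor$ is also a size-$\lfloor n/2\rfloor$ subset of $V(G)$, to which the sparse-half hypothesis applies. Combined with the identity $\sum_v t_v = 3 t(G)$, a strong per-vertex lower bound on $t_v$ together with a convexity argument should deliver the desired global estimate.

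The per-vertex bound is obtained by a two-stage bootstrap. First, apply Proposition~\ref{prop-1} to the triangle-free graph $G[N(v)]$ with ratio $n/(2d(v))\in(0,1]$ (well-defined since $d(v)\ge n/2$). This produces a subset of $N(v)$ of size $n/2$ spanning at most $\bigl(n/(2d(v))\bigr)^2 t_v$ edges, and the sparse-half hypothesis forces
\[ t_v \;\ge\; \frac{2\,d(v)^2}{9}. \]
Next, with $\alpha_v := n/(2d(v))$ and $c_v := t_v/d(v)^2$, the upper bound $d(v)\le 9n/14$ gives $\alpha_v \ge 7/9$, while the previous step gives $c_v \ge 2/9$; hence $\alpha_v+c_v\ge 1$, and Theorem~\ref{thm-local-density-K3}(1) applies to $G[N(v)]$. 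It yields a subset of $N(v)$ of size $n/2$ spanning at most $(2\alpha_v - 1) t_v = \tfrac{n - d(v)}{d(v)}\, t_v$ edges; invoking the sparse-half hypothesis a second time sharpens the bound to
\[ t_v \;\ge\; \frac{d(v)\,n^2}{18\,(n - d(v))}. \]

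Summing this over $v$ and using $\sum_v t_v = 3 t(G)$ reduces the proof to the estimate
\[ \sum_{v\in V(G)} \frac{d(v)}{n-d(v)} \;\ge\; \frac{2cn}{1-2c}. \]
Since $f(x)=x/(n-x)$ is convex on $[0,n)$ and $(1/n)\sum_v d(v) = 2cn$, this is immediate from Jensen's inequality. Substituting and simplifying yields $t(G) \ge c\,n^3/(27(1-2c))$, as required.

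The main obstacle is verifying the hypothesis $\alpha+c \ge 1$ of Theorem~\ref{thm-local-density-K3}, which is precisely why the preliminary application of Proposition~\ref{prop-1} is indispensable. The verification is \emph{tight}: the estimate $\alpha_v \ge 7/9$ uses the upper bound $\Delta(G)\le 9n/14$ and $c_v\ge 2/9$ uses the lower bound $\delta(G)\ge n/2$, summing to $7/9+2/9 = 1$ with no slack. Any weakening of either degree constraint would require a separate argument to activate Theorem~\ref{thm-local-density-K3} (or an alternative route entirely).
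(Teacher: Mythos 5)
Your proposal is correct and follows essentially the same route as the paper's proof: bootstrap $c_v\ge 2/9$ via Proposition~\ref{prop-1}, use $\Delta(G)\le 9n/14$ to get $\alpha_v\ge 7/9$ so that Theorem~\ref{thm-local-density-K3} applies to each neighborhood, derive $e(N(v))\ge \frac{n^2}{18}\frac{d(v)}{n-d(v)}$, and finish with Jensen. Your statement of the Jensen step, $\sum_v d(v)/(n-d(v))\ge 2cn/(1-2c)$, is in fact the cleanly correct form of an intermediate expression that the paper writes with a missing factor of $n$.
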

\begin{proof}
For every $v\in V(G)$ let $\alpha_{v} = \frac{n}{2d(v)}$ and $c_{v} = e(N(v))/\left(d(v)\right)^2$.
First notice that $c_v \ge 2/9$ for all $v\in V(G)$,
since otherwise by Proposition \ref{prop-1}, there would be a set $S \subset N(v)$
with $|S| = n/2$ such that $e(S)\le \alpha_v^2 \cdot e(N(v)) < n^2/18$, a contradiction.

Fix $v \in V(G)$.
Since $d(v)\le 9n/14$, we see $\alpha_v \ge 7/9 \ge 1-c_v$.
By Theorem \ref{thm-local-density-K3} and our assumption,
there exists a vertex set $T\subset N(v)$ with $|T|=n/2$ such that $\frac{n^2}{18}\le e(T)\le (2\alpha_v-1) e(N(v))$.
This implies that for all $v\in V(G)$,
\begin{align}
e(N(v)) \ge \frac{n^2}{18}\frac{1}{2\alpha_v-1} = \frac{n^2}{18} \frac{d(v)}{n-d(v)}. \notag
\end{align}
Summing over all $v \in V(G)$, we obtain that $t(G) = \frac{1}{3}\sum_{v\in V(G)}e(N(v))$ is at least
\begin{align}
\frac{n^2}{54} \sum_{v\in V(G)}\frac{d(v)}{n-d(v)}\ge \frac{n^2}{54} \frac{ \sum_{v\in V(G)} d(v)}{ \sum_{v\in V(G)}(n-d(v))}= \frac{n^2}{54} \frac{2e(G)}{n^2-2e(G)}
= \frac{c}{27(1-2c)}n^3. \notag
\end{align}
Here we used Jensen's inequality and the fact that $\frac{x}{n-x}$ is concave up for $x \in (0,n)$.
\end{proof}

We also need the following lemma in \cite{S07}.
%We include its short proof here.
For distinct $u,v\in V(G)$,
let $N(uv)$ denote the set of common neighbors of $u$ and $v$ and let $d(uv) = |N(uv)|$.

\begin{lemma}[Sudakov, \cite{S07}]\label{lemma-three-independent-sets}
Every graph $G$ with $e$ edges and $m$ triangles contains a triangle $uvw$ such that
$d(uv)+d(vw)+d(wu) \ge \frac{9m}{e}.$
\end{lemma}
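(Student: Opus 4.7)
The plan is to prove Lemma \ref{lemma-three-independent-sets} by an averaging argument over all triangles, using Cauchy--Schwarz on the edge-wise codegree sequence.

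First, I would record the key double-counting identity. For any edge $xy\in E(G)$, the codegree $d(xy)$ is precisely the number of triangles of $G$ containing $xy$, because each common neighbor $z$ of $x$ and $y$ yields the triangle $xyz$ and conversely. Summing $d(xy)$ over all edges therefore counts each triangle three times, giving $\sum_{xy\in E(G)} d(xy)=3m$.

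Next I would compute the total $\Sigma := \sum_{uvw} \bigl(d(uv)+d(vw)+d(wu)\bigr)$, where the sum ranges over the $m$ triangles $uvw$ of $G$. Swapping the order of summation, each edge $xy$ contributes its value $d(xy)$ once for every triangle that contains it, and the number of such triangles is again $d(xy)$. Thus each edge contributes $d(xy)^2$, giving
\begin{align}
\Sigma = \sum_{xy\in E(G)} d(xy)^2. \notag
\end{align}
Applying the Cauchy--Schwarz inequality together with $\sum_{xy} d(xy)=3m$, we obtain
\begin{align}
\Sigma \ge \frac{1}{e}\Bigl(\sum_{xy\in E(G)} d(xy)\Bigr)^2 = \frac{9m^2}{e}. \notag
\end{align}

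Finally, since $\Sigma$ is the sum of $d(uv)+d(vw)+d(wu)$ over the $m$ triangles of $G$, a standard averaging argument produces some triangle $uvw$ with $d(uv)+d(vw)+d(wu)\ge \Sigma/m \ge 9m/e$, as required. There is no real obstacle here: the only subtle point is making sure each edge is counted with the correct multiplicity in the swap-of-summation step, which I would spell out carefully to avoid off-by-a-factor-of-three errors, after which Cauchy--Schwarz closes the proof immediately.
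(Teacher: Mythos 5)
Your proof is correct. The paper itself states this lemma without proof, citing Sudakov \cite{S07}, and your argument --- the identity $\sum_{xy\in E(G)}d(xy)=3m$, the swap of summation giving $\sum_{xy}d(xy)^2$, Cauchy--Schwarz, and averaging over the $m$ triangles --- is precisely the standard proof from that source, with the edge multiplicities handled correctly.
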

%\begin{proof}
%Since the average $\frac{1}{m} \sum_{G[uvw]\simeq K_3}\left(d(uv)+d(vw)+d(wu)\right)$ is equal to
%\begin{align}
%\frac{1}{m} \sum_{uv\in E(G)} \left(d(uv)\right)^2 \notag \ge \frac{1}{me} \left(\sum_{uv\in E(G)} d(uv)\right)^2 = \frac{1}{me}(3m)^2 = \frac{9m}{e}, %\notag
%\end{align}
%there exists a triangle $uvw$ in $G$ such that $d(uv)+d(vw)+d(wu) \ge {9m}/{e}$.
%\end{proof}

Notice that if $G$ is $K_4$-free, then $N(uv)$ is independent for all $uv\in E(G)$ and $N(uv) \cap N(vw) = \emptyset$ for all triangles $uvw$ in $G$.
The following lemma is an immediate consequence of Lemmas \ref{lemma-many-K3} and \ref{lemma-three-independent-sets}.
%It tells that if a $K_4$-free graph contains many triangles,
%then it contains three disjoint independent sets with a large union.

\begin{lemma}\label{lemma-lower-bound-size-three-independent}
Let
%$n\in \mathbb{N}$ be even and
$G$ be an $n$-vertex $K_4$-free graph with $cn^2$ edges and $n/2 \le \delta(G) \le \Delta(G) \le 9n/14$.
%Let $G$ be a regular $K_4$-free graph on $n$ vertices with $cn^2 \le 0.297n^2$ edges.
Suppose every vertex set of size $\lf n/2 \rf$ in $G$ spans at least $n^2/18$ edges.
Then there exist three disjoint independent sets $V_1,V_2,V_3$ in $G$
such that
\begin{align}
|V_1| + |V_2| + |V_3| \ge \frac{n}{3(1-2c)}. \notag
\end{align}
\end{lemma}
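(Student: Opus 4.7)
The plan is to put together Lemma~\ref{lemma-many-K3} and Lemma~\ref{lemma-three-independent-sets} essentially mechanically, taking advantage of the two structural observations about $K_4$-free graphs that the excerpt has already highlighted: for every edge $uv$ of $G$, the common neighborhood $N(uv)$ is an independent set (otherwise an edge in $N(uv)$ together with $u,v$ produces a $K_4$), and for every triangle $uvw$ of $G$, the sets $N(uv),N(vw),N(wu)$ are pairwise disjoint (otherwise a vertex lying in two of them would complete a $K_4$ with $u,v,w$).

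First, I would invoke Lemma~\ref{lemma-many-K3}, whose hypotheses coincide exactly with those of the present lemma, to obtain the triangle count bound
\begin{equation*}
t(G) \;\ge\; \frac{c}{27(1-2c)}\,n^{3}.
\end{equation*}
Then I would apply Lemma~\ref{lemma-three-independent-sets} with $m=t(G)$ and $e=e(G)=cn^{2}$ to produce a triangle $uvw$ in $G$ with
\begin{equation*}
d(uv)+d(vw)+d(wu)\;\ge\;\frac{9\,t(G)}{e(G)}\;\ge\;\frac{9}{cn^{2}}\cdot\frac{c\,n^{3}}{27(1-2c)}\;=\;\frac{n}{3(1-2c)}.
\end{equation*}

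Finally, set
\begin{equation*}
V_{1}=N(uv),\qquad V_{2}=N(vw),\qquad V_{3}=N(wu).
\end{equation*}
By the two observations recalled above, each $V_i$ is independent and the three sets are pairwise disjoint, while the displayed inequality gives precisely $|V_{1}|+|V_{2}|+|V_{3}|\ge n/(3(1-2c))$, as required.

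There is no real obstacle in this argument; the lemma is billed as an immediate corollary and the only thing one has to be careful about is checking that the hypotheses of Lemma~\ref{lemma-many-K3} (degree bounds and the sparse-half assumption) and of Lemma~\ref{lemma-three-independent-sets} (no extra structural assumptions needed) line up with the hypotheses we are given, so that both bounds are legitimately available to chain together.
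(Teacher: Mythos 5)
Your proof is correct and is exactly the argument the paper intends: it states the lemma as an immediate consequence of Lemmas \ref{lemma-many-K3} and \ref{lemma-three-independent-sets}, combined via the two structural facts about common neighborhoods in $K_4$-free graphs, and your computation $9t(G)/e(G) \ge n/(3(1-2c))$ matches. No issues.
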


Now we are ready to prove Theorem \ref{thm-case-medium-K4}.

\begin{proof}[Proof of Theorem \ref{thm-case-medium-K4}]
%Let $G$ be a $K_4$-free graph on $n$ vertices with $cn^2 \le 0.297 n^2$ edges, $\delta(G) \ge n/2$ and $\Delta(G)\le 9n/14$.
Let
%$n$ be even and
$G$ be a $K_4$-free regular graph on $n$ vertices with $cn^2$ edges, where $c\le 0.297$.
Suppose that every vertex set of size $n/2$ in $G$ spans at least $n^2/18$ edges.
By Theorem~\ref{thm-sparse-half-regular-K4-sparse}, we may assume that $c\in [1/4, 0.297]$.
So every vertex has degree $2cn$ with $n/2\le 2cn\le 0.594n< 9n/14$.
Then by Lemma \ref{lemma-lower-bound-size-three-independent}, there exist three disjoint independent sets $V_1,V_2,V_3$ in $G$
such that
\begin{align}
|V_1| + |V_2| + |V_3| = g(c)n, \mbox{ ~where~ } g(c) = \frac{1}{3(1-2c)}. \notag
\end{align}
Let $V_4 = V(G) \setminus \left(\bigcup_{i=1}^{3}V_i\right)$ and let $x_i = |V_i|/n$ for $i \in [4]$.
Without loss of generality we may assume that $1/2> x_1 \ge x_2 \ge x_3$.
Let $e_{ij} = e(V_{i},V_{j})$ for all $\{i,j\} \subset [4]$ (so $e_{ij} = e_{ji}$) and let $e_4 = e(V_4)$.
We will consider four cases depending on the values of $x_1,x_2$ and $x_3$.

%\begin{figure}
%\begin{center}
%\begin{tikzpicture}[scale=0.6,cap=round,line join=round,>=triangle 45,x=1cm,y=1cm]
%\clip(-10,-6) rectangle (10,10);
%\draw [line width=1pt] (0,5.62539325357813) circle (2cm);
%\draw [line width=1pt] (-5,-3) circle (2cm);
%\draw [line width=1pt] (5,-3) circle (2cm);
%\draw [line width=1pt] (0,-0.13638084830477104) circle (2cm);
%\draw [line width=1pt] (0,3.6)-- (0,1.9);
%\draw [line width=1pt] (-3.3,-2)-- (-1.7,-1.2);
%\draw [line width=1pt] (1.7,-1.2)-- (3.3,-2);
%\draw [line width=1pt] (1,3.9)-- (4,-1.25);
%\draw [line width=1pt] (-1,3.9)-- (-4,-1.25);
%\draw [line width=1pt] (-3,-3)-- (3,-3);
%\draw (-0.5,6) node[anchor=north west] {$V_{1}$};
%\draw (-5.5,-2.5) node[anchor=north west] {$V_2$};
%\draw (4.5,-2.5) node[anchor=north west] {$V_3$};
%\draw (-1,0.4) node[anchor=north west] {$V_4$};
%\draw (-0.5,-3) node[anchor=north west] {$e_1$};
%\draw (2.2,2.5) node[anchor=north west] {$e_2$};
%\draw (-3.3,2.5) node[anchor=north west] {$e_3$};
%\draw (0,3) node[anchor=north west] {$e_4$};
%\draw (-3.2,-0.8) node[anchor=north west] {$e_5$};
%\draw (2.2,-0.8) node[anchor=north west] {$e_6$};
%\draw (0.3,0.3) node[anchor=north west] {$e_7$};
%\end{tikzpicture}
%\end{center}
%\end{figure}

\textbf{Case 1:} $x_1 + x_2 \ge x_1 + x_3 \ge x_2 + x_3 \ge \frac{1}{2}$.

\begin{table}[htbp]
\centering
\begin{tabular}{llll}
\toprule
$V_1$                      & $V_2$                     &$V_3$                      & $V_4$\\
\midrule
$x_1$                      & $1/2-x_1$         & 0                         & 0 \\
$x_1$                      &0                          & $1/2-x_1$         & 0 \\
0                          & $x_2$                     & $1/2-x_2$         & 0 \\
$1/2-x_2$                          & $x_2$                     & 0         & 0\\
$1/2-x_3$                          & 0                     & $x_3$         & 0\\
 0                          & $1/2-x_3$                     & $x_3$         & 0\\
\bottomrule
\end{tabular}
\caption{\label{table.label} different schemes for choosing $n/2$ vertices from $G$.}
\end{table}

Now we choose different $n/2$ vertices from $G$ according to Table 1.
For example, the second row in Table 1 means to choose all vertices of $V_1$
and choose a set $S \subset V_2$ with $|S| = \left(1/2-x_1\right)n$ uniformly at random.
Then the expected value of $e(V_1 \cup S)$ is $\frac{1/2-x_2}{x_2}e_{12}$.
So there exists $S \subset V_2$ with $|S| = \left(1/2-x_1\right)n$ such that
$e(V_1 \cup S) \le \frac{1/2-x_2}{x_2}e_{12}$.
By assumption, we have
\begin{align}
\frac{1/2-x_1}{x_2}e_{12} \ge \frac{n^2}{18} \quad \Rightarrow \quad e_{12} \ge \frac{1}{36}\frac{x_2}{1-2x_1}n^2. \notag
\end{align}
Similarly, one can get from Table 1 that for all $(i,j) \in [3]\times [3]$ with $i\neq j$,
%\begin{align}
%\frac{1/2-x_1}{x_3}e_{13} \ge \frac{n^2}{18} \Rightarrow e_{13} \ge \frac{1}{36}\frac{x_3}{1-2x_1}n^2, \notag\\
%\frac{1/2-x_2}{x_1}e_{12} \ge \frac{n^2}{18} \Rightarrow e_{12} \ge \frac{1}{36}\frac{x_1}{1-2x_2}n^2, \notag\\
%\frac{1/2-x_2}{x_3}e_{23} \ge \frac{n^2}{18} \Rightarrow e_{23} \ge \frac{1}{36}\frac{x_3}{1-2x_2}n^2, \notag\\
%\frac{1/2-x_3}{x_1}e_{13} \ge \frac{n^2}{18} \Rightarrow e_{13} \ge \frac{1}{36}\frac{x_1}{1-2x_3}n^2, \notag\\
%\frac{1/2-x_3}{x_2}e_{23} \ge \frac{n^2}{18} \Rightarrow e_{12} \ge \frac{1}{36}\frac{x_2}{1-2x_3}n^2. \notag
%\end{align}
\begin{align}
\frac{1/2-x_i}{x_j}e_{ij} \ge \frac{n^2}{18} \quad \Rightarrow \quad e_{ij} \ge \frac{1}{36}\frac{x_j}{1-2x_i}n^2. \notag
\end{align}
Adding them up, we obtain that $e_{12}+e_{13}+e_{23} = \frac{1}{2}\sum_{i\neq j}e_{ij}$ is at least
\begin{align}
&\frac{1}{18}\left(\frac{x_2+x_3}{1-2x_1}+\frac{x_1+x_3}{1-2x_2}+\frac{x_2+x_1}{1-2x_3}\right)n^2 \notag =\frac{1}{18}\left(\frac{g(c)-x_1}{1-2x_1}+\frac{g(c)-x_2}{1-2x_2}+\frac{g(c)-x_3}{1-2x_3}\right)n^2. \notag
\end{align}
Since $\frac{g(c)-x}{1-2x}$ is concave up, by Jensen's inequality we see that
\begin{align}
e_{12}+e_{13}+e_{23} \ge \frac{1}{6}\cdot\frac{g(c)-(x_1+x_2+x_3)/{3}}{1-2(x_1+x_2+x_3)/{3}}n^2=\frac{g(c)}{3(3-2g(c))}n^2. \tag{1.1}
\end{align}

On the other hand, since $G$ is regular,\footnote{We point out that throughout the proof of Theorem \ref{thm-case-medium-K4}, this is the only place where we need the restriction that $G$ is regular.}
we have
\begin{align}
e_{14} + e_{24} + e_{34} + 2e_{4} = \sum_{v \in V_4}d(v) = 2cn \times |V_4| = 2c(1-g(c))n^2. \tag{1.2}
\end{align}
Since $G[V_4]$ is $K_4$-free, by Tur\'an's Theorem we get
\begin{align}
e_{4} \le \frac{1}{3}|V_{4}|^2 = \frac{(1-g(c))^2}{3}n^2. \tag{1.3}
\end{align}
Therefore, it follows from $(1.1),(1.2)$ and $(1.3)$ that (recall that $V_1, V_2, V_3$ are independent)
\begin{align}
cn^2 + \frac{(1-g(c))^2}{3}n^2 \ge e(G) + e_{4} \ge \frac{g(c)}{3(3-2g(c))}n^2 + 2c(1-g(c))n^2, \notag
\end{align}
which is a contradiction because
\begin{align}
h(c):=\frac{g(c)}{3(3-2g(c))}+2c\left(1-g(c)\right)-\left(c+\frac{(1-g(c))^2}{3}\right) \notag
\end{align}
is decreasing in $c$ for $c \in [1/4,0.297]$ and $h(0.297)>0$ (see \cite{CAL}). This proves Case 1.

\bigskip

\textbf{Case 2:} $x_2 + x_3 \le x_1 + x_3 \le x_1 + x_2 < \frac{1}{2}$.

Note that this case can exist only when $g(c) < 3/4$, which implies $c < 5/18$.

\begin{table}[htbp]
\centering
\begin{tabular}{llll}
\toprule
$V_1$                      & $V_2$                     &$V_3$                      & $V_4$\\
\midrule
$x_1$                      & $x_2$                     & $1/2-x_1-x_2$             & 0 \\
$x_1$                      & $1/2-x_1-x_3$             & $x_3$                     & 0 \\
$1/2-x_2-x_3$              & $x_2$                     & $x_3$                     & 0 \\
$x_1$                      &   0                       & 0                         &  $1/2-x_1$\\
0                          &  $x_2$                    & 0                         & $1/2-x_2$ \\
0                          &  0                        & $x_3$                     & $1/2-x_3$ \\
\bottomrule
\end{tabular}
\caption{\label{table.label} different schemes for choosing $n/2$ vertices from $G$.}
\end{table}

Now we choose $n/2$ vertices according to Table 2.
Then similar to Case 1, we obtain that
%\begin{align}
%e_{12} + \frac{1/2-x_1-x_2}{x_3}(e_{13}+e_{23}) & \ge \frac{n^2}{18}, \tag{2.1} \\
%e_{13} + \frac{1/2-x_1-x_3}{x_2}(e_{12}+e_{23}) & \ge \frac{n^2}{18}, \tag{2.2} \\
%e_{23} + \frac{1/2-x_2-x_3}{x_1}(e_{12}+e_{13}) & \ge \frac{n^2}{18}, \tag{2.3} \\
%\frac{1/2-x_1}{x_4} e_{14} + \left( \frac{1/2-x_1}{x_4} \right)^2 e_{4} & \ge \frac{n^2}{18}, \tag{2.4} \\
%\frac{1/2-x_2}{x_4} e_{24} + \left( \frac{1/2-x_2}{x_4} \right)^2 e_{4} & \ge \frac{n^2}{18}, \tag{2.5} \\
%\frac{1/2-x_3}{x_4} e_{34} + \left( \frac{1/2-x_3}{x_4} \right)^2 e_{4} & \ge \frac{n^2}{18}. \tag{2.6}
%\end{align}
for every $k\in [3]$ and $\{i,j\}=[3]\backslash \{k\}$,
\begin{align}
e_{ij}+\frac{1/2-x_i-x_j}{x_k}(e_{ik}+e_{jk}) & \ge \frac{n^2}{18}, \tag{2.1}
\end{align}
and for all $i\in [3]$
\begin{align}
\frac{1/2-x_i}{x_4} e_{i4} + \left( \frac{1/2-x_i}{x_4} \right)^2 e_{4} & \ge \frac{n^2}{18}. \tag{2.2}
\end{align}
By simplifying the linear combination of
\begin{align}
\sum_{k\in [3]}\left(\frac{x_k}{x_4} \times (2.1)\right) + \sum_{i\in [3]}\left( \frac{x_4}{1/2-x_i} \times (2.2)\right), \notag
\end{align}
we can derive that
%\begin{align}
%\left(\sum_{\{i,j\}\subset [4]}e_{ij} +e_4\right) + \frac{e_4}{2x_{4}}
%& = \frac{x_3}{x_4}\times (2.1) + \frac{x_2}{x_4}\times (2.2) + \frac{x_1}{x_4}\times (2.3) \notag\\
%& \quad + \frac{x_4}{1/2-x_1}\times (2.4) + \frac{x_4}{1/2-x_2}\times (2.5) + \frac{x_4}{1/2-x_3}\times (2.6) \notag \\
%& \ge \frac{x_1+x_2+x_3}{18x_4}n^2 + \frac{x_4}{18}\left(\frac{1}{1/2-x_1}
% +\frac{1}{1/2-x_2}+\frac{1}{1/2-x_3}\right)n^2 \notag\\
%& \ge  \frac{x_1+x_2+x_3}{18x_4}n^2 + \frac{x_4}{18} \times \frac{3}{1/2-(x_1+x_2+x_3)/{3}}n^2 \notag\\
%& = \frac{1-x_4}{18x_4}n^2+\frac{x_4}{3-2(1-x_4)}n^2. \notag
%\end{align}
\begin{align}
& e(G)+ \frac{e_4}{2x_{4}} \ge \frac{x_1+x_2+x_3}{18x_4}n^2 + \frac{x_4}{18}\left(\frac{1}{1/2-x_1}
 +\frac{1}{1/2-x_2}+\frac{1}{1/2-x_3}\right)n^2 \notag\\
\ge & \frac{x_1+x_2+x_3}{18x_4}n^2 + \frac{x_4}{18} \times \frac{3}{1/2-(x_1+x_2+x_3)/{3}}n^2 = \frac{1-x_4}{18x_4}n^2+\frac{x_4}{3-2(1-x_4)}n^2. \notag
\end{align}
Since $e_4 \le |V_{4}|^2/3 = x_{4}^2n^2/3$ and $x_4 = 1-g(c)$, the inequality above implies
\begin{align}
c+ \frac{1-g(c)}{6}\ge \frac{1-(1-g(c))}{18(1-g(c))}+\frac{1-g(c)}{3-2(1-(1-g(c)))}= \frac{g(c)}{18(1-g(c))}+\frac{1-g(c)}{3-2g(c)}, \notag
\end{align}
which is a contradiction because
\begin{align}
k(c):= c+ \frac{1-g(c)}{6} - \left(\frac{g(c)}{18(1-g(c))}+\frac{1-g(c)}{3-2g(c)} \right) \notag
\end{align}
is strictly smaller than $0$ for $c \in [1/4,5/18)$ (see \cite{CAL}). This proves Case 2.

\bigskip

\textbf{Case 3:} $x_2 + x_3< \frac{1}{2}\le x_1 + x_3 \le x_1 + x_2$.

\begin{table}[htbp]
\centering
\begin{tabular}{llll}
\toprule
$V_1$                      & $V_2$                     &$V_3$                      & $V_4$\\
\midrule
$x_1$                      & $1/2-x_1$         & 0                         & 0 \\
$x_1$                      & 0                         & $1/2-x_1$        & 0 \\
$1/2-x_2-x_3$    & $x_2$                     & $x_3$                     & 0 \\
$x_1$                      & 0                         & 0                         & $1/2-x_1$\\
$1/2-x_2-x_4$    & $x_2$                     & 0                         & $x_4$ \\
$1/2-x_3-x_4$    &  0                        & $x_3$                     & $x_4$ \\
\bottomrule
\end{tabular}
\caption{\label{table.label} different schemes for choosing $n/2$ vertices from $G$.}
\end{table}

We choose $n/2$ vertices according to Table 3. Similar as above, we can obtain that
%\begin{align}
%\frac{1/2-x_1}{x_2}e_{12} & \ge \frac{n^2}{18}, \tag{3.1} \\
%\frac{1/2-x_1}{x_3}e_{13} & \ge \frac{n^2}{18}, \tag{3.2} \\
%e_{23} + \frac{1/2-x_2-x_3}{x_1}(e_{12}+e_{13}) & \ge \frac{n^2}{18}, \tag{3.3} \\
%\frac{1/2-x_1}{x_4}e_{14} + \left(\frac{1/2-x_1}{x_4}\right)^2 e_4 & \ge \frac{n^2}{18}, \tag{3.4} \\
%e_{24} + e_4 + \frac{1/2-x_2-x_4}{x_1}(e_{12}+e_{14}) & \ge \frac{n^2}{18}, \tag{3.5} \\
%e_{34} + e_4 + \frac{1/2-x_3-x_4}{x_1}(e_{13}+e_{14}) & \ge \frac{n^2}{18}. \tag{3.6}
%\end{align}
\begin{align}
& \frac{1/2-x_1}{x_i}e_{1i} \ge \frac{n^2}{18} \mbox{ ~for~ } i \in \{2,3\},  \tag{3.1} \\
& e_{23} + \frac{1/2-x_2-x_3}{x_1}(e_{12}+e_{13}) \ge \frac{n^2}{18}, \tag{3.2} \\
& \frac{1/2-x_1}{x_4}e_{14} + \left(\frac{1/2-x_1}{x_4}\right)^2 e_4 \ge \frac{n^2}{18}, \mbox{ ~and~ } \tag{3.3} \\
& e_{j4} + e_4 + \frac{1/2-x_j-x_4}{x_1}(e_{1j}+e_{14}) \ge \frac{n^2}{18} \mbox{ ~for~ } j\in \{2,3\}. \tag{3.4}
\end{align}

By simplifying the linear combination of
\begin{align}
\sum_{i=2,3}\left(\frac{x_i^2}{(1/2-x_1)x_1} \times (3.1)\right)
+ (3.2) + \frac{x_4^2}{(1/2-x_1)x_1}\times (3.3) + \sum_{j=2,3}(3.4), \notag
\end{align}
we derive that
\begin{align}
e(G) + \frac{e_4}{2x_{1}}
& \ge \left(\frac{1}{6}+\frac{x^2_2+x^2_3+x^2_4}{9x_1(1-2x_1)}\right)n^2. \notag
\end{align}
Since $e_4 \le |V_{4}|^2/3 = x_{4}^2n^2/3$, the inequality above implies that
\begin{align}
c + \frac{x_4^2}{6x_1} \ge \frac{1}{6}+\frac{x^2_2+x^2_3+x^2_4}{9x_1(1-2x_1)}. \notag
\end{align}
This is a contradiction due to the following claim whose proof can be found in the appendix.
\begin{claim}\label{claim-case3}
Under the conditions of Case 3, we have
\begin{align}
\frac{1}{6}+ \frac{x^2_2+x^2_3+x^2_4}{9x_1(1-2x_1)} - \frac{x_4^2}{6x_1}  - c > 0. \notag
\end{align}
\end{claim}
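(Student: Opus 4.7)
Let $f(x_1,x_2,x_3,c)$ denote the left-hand side of the claim. The plan is to eliminate $x_2,x_3$ in favor of the single quantity $x_2+x_3=g(c)-x_1$, reducing the claim to a two-variable inequality in $x_1$ and $c$, which I then verify by direct (albeit computational) analysis.

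The key observation is that $x_2^2+x_3^2$ enters $f$ with the positive coefficient $\frac{1}{9x_1(1-2x_1)}$ (positive since $x_1<1/2$), while every other term depends only on $x_1$ and $c$ (recall $x_4=1-g(c)$ is determined by $c$). By Cauchy--Schwarz,
\begin{equation*}
x_2^2+x_3^2 \;\geq\; \tfrac{1}{2}(x_2+x_3)^2 \;=\; \tfrac{1}{2}\bigl(g(c)-x_1\bigr)^2,
\end{equation*}
so it suffices to prove the stronger inequality
\begin{equation*}
F(x_1,c)\;:=\;\frac{1}{6}\;+\;\frac{\tfrac12(g(c)-x_1)^2+(1-g(c))^2}{9\,x_1(1-2x_1)}\;-\;\frac{(1-g(c))^2}{6\,x_1}\;-\;c\;>\;0
\end{equation*}
for every $c\in[1/4,0.297]$ and every admissible $x_1$. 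Unpacking the Case~3 hypotheses $x_2+x_3<1/2\le x_1+x_3$ together with $x_1\ge x_2\ge x_3$ forces
\[
\max\bigl\{1-g(c),\;g(c)/3,\;g(c)-1/2\bigr\} \;\le\; x_1 \;<\; 1/2,
\]
where the first lower bound follows from $g(c)-x_1=x_2+x_3\ge 2(1/2-x_1)=1-2x_1$ (using $x_2,x_3\ge 1/2-x_1$).

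After clearing denominators by multiplying through by $18\,x_1(1-2x_1)>0$, the inequality $F(x_1,c)>0$ becomes a polynomial inequality in $x_1$ whose coefficients are rational in $c$ through $g(c)=1/(3(1-2c))$. For each fixed $c$, one checks that $F(\cdot,c)\to+\infty$ as $x_1\to 1/2^-$ and that $\partial F/\partial x_1$ has a unique zero $x_1^\ast(c)$ in the interior of the feasible interval. Substituting back yields an explicit one-variable function $c\mapsto F(x_1^\ast(c),c)$, whose strict positivity on $[1/4,0.297]$ completes the proof; the value at the left endpoint $x_1=\max\{1-g(c),g(c)/3,g(c)-1/2\}$ should also be inspected to rule out a corner minimum.

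\textbf{Main obstacle.} The decisive step is the univariate verification on $c\in[1/4,0.297]$: the quantity $F(x_1^\ast(c),c)$ is explicit but algebraically unwieldy, and in keeping with Cases~1 and~2 (which appeal to the computational reference \cite{CAL}) a symbolic/numerical check is by far the most honest route. A strictly by-hand argument would require either a lucky algebraic simplification or a refined lower bound on $f$ exploiting the constraint $x_3\ge 1/2-x_1$, which tightens Cauchy--Schwarz exactly when $x_2=x_3=(g(c)-x_1)/2$ sits on the boundary of the feasible set. Everything else in the reduction is routine bookkeeping.
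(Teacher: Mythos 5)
Your proposal matches the paper's proof essentially verbatim: the same Cauchy--Schwarz reduction $x_2^2+x_3^2\ge\tfrac12(x_2+x_3)^2$, the same substitutions $x_2+x_3=g(c)-x_1$ and $x_4=1-g(c)$ (your $F(x_1,c)$ is the paper's $\ell(c,x_1)$), and the same appeal to a computer-assisted check of the resulting two-variable inequality. The paper simply verifies $\ell>0.003$ over the full box $x_1\in(0,1/2)$, $c\in[1/4,1/3]$ rather than restricting to the feasible subregion as you suggest.
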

This contradiction completes the proof of Case 3.

\bigskip

\textbf{Case 4:} $x_2 + x_3 \le x_1 + x_3 < \frac{1}{2} \le x_1 + x_2$.

\begin{table}[htbp]
\centering
\begin{tabular}{llll}
\toprule
$V_1$                      & $V_2$                     &$V_3$                      & $V_4$\\
\midrule
$x_1$                      & $1/2-x_1$         & 0                         & 0 \\
$x_1$                      & 0                         & $1/2-x_1$             & 0 \\
$x_1$                      &$1/2-x_1-x_3$    & $x_3$                     & 0 \\
$1/2-x_2-x_3$    & $x_2$                     & $x_3$                     & 0 \\
$x_1$                      & 0                         & 0                         & $1/2-x_1$\\
0                          & $x_2$                     & 0                         & $1/2-x_2$\\
$1/2-x_3-x_4$    &  0                        & $x_3$                     & $x_4$ \\
0    &  $1/2-x_3-x_4$                        & $x_3$                     & $x_4$ \\
\bottomrule
\end{tabular}
\caption{\label{table.label} different schemes for choosing $n/2$ vertices from $G$.}
\end{table}

Choosing $n/2$ vertices according to Table 4, we obtain that
%\begin{align}
%\frac{1/2-x_1}{x_2}e_{12} & \ge \frac{n^2}{18}, \tag{4.1} \\
%\frac{1/2-x_2}{x_1}e_{12} & \ge \frac{n^2}{18}, \tag{4.2} \\
%e_{13}+\frac{1/2-x_1-x_3}{x_2}(e_{12}+e_{23}) & \ge \frac{n^2}{18}, \tag{4.3} \\
%e_{23}+\frac{1/2-x_2-x_3}{x_1}(e_{12}+e_{13}) & \ge \frac{n^2}{18}, \tag{4.4} \\
%\frac{1/2-x_1}{x_4}e_{14}+\left(\frac{1/2-x_1}{x_4}\right)^2e_4 & \ge \frac{n^2}{18}, \tag{4.5} \\
%\frac{1/2-x_2}{x_4}e_{24}+\left(\frac{1/2-x_2}{x_4}\right)^2e_4 & \ge \frac{n^2}{18}, \tag{4.6} \\
%e_{34}+e_4+\frac{1/2-x_3-x_4}{x_1}(e_{13}+e_{14}) & \ge \frac{n^2}{18}, \tag{4.7} \\
%e_{34}+e_4+\frac{1/2-x_3-x_4}{x_2}(e_{23}+e_{24}) & \ge \frac{n^2}{18}. \tag{4.8}
%\end{align}
\begin{align}
& \frac{1/2-x_i}{x_{3-i}}e_{i,3-i} \ge \frac{n^2}{18} \quad \Rightarrow \quad  e_{i,3-i} \ge \frac{x_{3-i}}{1/2-x_i} \frac{n^2}{18} \mbox{ ~~for each~} i \in [2], \tag{4.1} \\
& e_{j3}+\frac{1/2-x_j-x_3}{x_2}(e_{12}+e_{j3}) \ge \frac{n^2}{18} \mbox{ ~~for each~} j\in [2], \tag{4.2} \\
& \frac{1/2-x_k}{x_4}e_{k4}+\left(\frac{1/2-x_k}{x_4}\right)^2e_4 \ge \frac{n^2}{18} \mbox{ ~~for each~} k\in [2], \mbox{ ~~and }\tag{4.3} \\
& e_{34}+e_4+\frac{1/2-x_3-x_4}{x_{\ell}}(e_{\ell3}+e_{\ell4}) \ge \frac{n^2}{18} \mbox{ ~~for each~} \ell \in [2]. \tag{4.4}
\end{align}
By simplifying the linear combination of
\begin{align}
&\frac{1}{2}\left(1+\frac{1}{1-2x_3}-\frac{1}{x_1+x_2}\right)\sum_{i\in[2]}(4.1)
+  \frac{1}{1-2x_3}\sum_{j\in[2]}\left(\frac{x_{3-j}}{x_1+x_2} \times (4.2)\right) \notag\\
+ & \frac{1}{2(x_1+x_2)}\sum_{k\in[2]}\left(\frac{x_4}{1/2-x_k} \times (4.3) \right)
+  \frac{x_1+x_2}{1/2-x_3-x_4} \sum_{\ell\in[2]}\left(\frac{x_{\ell}}{1/2-x_3-x_4} \times (4.4) \right), \notag
\end{align}
it yields that
\begin{align}
e(G) + \frac{1-x_1-x_2}{2(x_1+x_2)x_4}e_4
%& = \left(1+\frac{1}{1-2x_3}-\frac{1}{x_1+x_2}\right)\left(\frac{x_2}{1/2-x_1}\times (4.1)+\frac{x_1}{1/2-x2}\times (4.2)\right) \notag\\
%& \quad + \frac{x_2}{x_1+x_2}\cdot \frac{1}{1-2x_3} \times (4.3) +  \frac{x_1}{x_1+x_2}\cdot \frac{1}{1-2x_3} \times (4.4) \notag\\
%& \quad + \frac{1}{2(x_1+x_2)}\left(\frac{x_4}{1/2-x_1} \times (4.5) + \frac{x_4}{1/2-x_2}\times (4.6) \right) \notag\\
%& \quad + \frac{x_1+x_2}{1/2-x_3-x_4} \left(\frac{x_1}{1/2-x_3-x_4} \times (4.7) + \frac{x_2}{1/2-x_3-x_4} \times (4.8) \right) \notag\\
& \ge \left( \frac{1}{2}\left(1+\frac{1}{1-2x_3}-\frac{1}{x_1+x_2}\right)\left(\frac{x_2}{1/2-x_1}+\frac{x_1}{1/2-x2}\right) \right.\notag\\
& \quad \left.+ \frac{1}{1-2x_3} + \frac{1}{2(x_1+x_2)}\left(\frac{x_4}{1/2-x_1}  + \frac{x_4}{1/2-x_2} \right) +1 \right) \frac{n^2}{18}. \notag
\end{align}
Since $e_4 \le |V_{4}|^2/3 = x_{4}^2n^2/3$, the inequality above implies that
\begin{align}
c + \frac{(1-x_1-x_2)x_4}{6(x_1+x_2)}
& \ge \frac{1}{18} \left(\frac{1}{2} \left(1+\frac{1}{1-2x_3}-\frac{1}{x_1+x_2}\right)\left(\frac{x_2}{1/2-x_1}+\frac{x_1}{1/2-x2}\right) \right.\notag\\
& \quad \left.+ \frac{1}{1-2x_3} + \frac{1}{2(x_1+x_2)}\left(\frac{x_4}{1/2-x_1}  + \frac{x_4}{1/2-x_2} \right) +1 \right). \notag
\end{align}
Again, this is a contradiction because of the following claim, whose proof is included in the appendix.

\begin{claim}\label{claim-case4}
Under the conditions of Case 4, we have
\begin{align}
c + \frac{(1-x_1-x_2)x_4}{6(x_1+x_2)}
& < \frac{1}{18} \left( \frac{1}{2}\left(1+\frac{1}{1-2x_3}-\frac{1}{x_1+x_2}\right)\left(\frac{x_2}{1/2-x_1}+\frac{x_1}{1/2-x2}\right) \right.\notag\\
& \quad \left.+ \frac{1}{1-2x_3} + \frac{1}{2(x_1+x_2)}\left(\frac{x_4}{1/2-x_1}  + \frac{x_4}{1/2-x_2} \right) +1 \right). \notag
\end{align}
\end{claim}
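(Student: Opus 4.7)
My plan is to mirror the strategy used for the analogous Claim~\ref{claim-case3} in the appendix, reducing this four-variable inequality to a two-variable one by exploiting the symmetry between $x_1$ and $x_2$, and then verifying the resulting bivariate inequality on a compact region.

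First I would observe that, once $s := x_1+x_2$ is fixed (and hence $x_3 = g(c)-s$, $x_4 = 1-g(c)$ are fixed), the LHS $c + \frac{(1-s)x_4}{6s}$ depends only on $s$, $c$, and $x_4$. On the RHS the dependence on $(x_1,x_2)$ appears only through
\begin{align}
T_1 \,=\, \frac{x_2}{1/2-x_1}+\frac{x_1}{1/2-x_2}, \qquad T_2 \,=\, \frac{x_4}{1/2-x_1}+\frac{x_4}{1/2-x_2}. \notag
\end{align}
Writing $x_1 = s/2+\tau$ and $x_2 = s/2-\tau$, a direct computation gives $T_1 = \frac{s(1-s)/2 - 2\tau^2}{(1-s)^2/4 - \tau^2}$, whose derivative with respect to $\tau^2$ equals $\frac{(1-s)(2s-1)/2}{((1-s)^2/4-\tau^2)^2}\ge 0$ since $s \ge 1/2$ in Case~4. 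Hence $T_1$ is minimized at $\tau=0$. Since $x\mapsto 1/(1/2-x)$ is convex, Jensen's inequality yields the same conclusion for $T_2$. The coefficient of $T_1$ in the RHS is $\tfrac{1}{2}\bigl(1+\tfrac{1}{1-2x_3}-\tfrac{1}{s}\bigr)$, which is non-negative (as $s\ge 1/2$ gives $\tfrac{1}{s}\le 2$ while $x_3\ge 0$ gives $1+\tfrac{1}{1-2x_3}\ge 2$), and the coefficient of $T_2$ is $\tfrac{1}{2s}>0$. Therefore the RHS is minimized at $x_1=x_2=s/2$, and it suffices to prove the inequality at this symmetric configuration.

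After substituting $x_1=x_2=s/2$, $x_3=g(c)-s$, and $x_4=1-g(c)$, the claim reduces to a two-variable inequality $F(c,s)>0$ on the compact region $c\in[1/4,0.297]$, $s\in[\max\{1/2,2g(c)/3\},\min\{g(c),1\}]$ with the additional strict constraint $s>2g(c)-1$ (coming from $x_1+x_3<1/2$). Clearing denominators turns this into a polynomial inequality in $(c,s)$. I would verify positivity over this compact region either by analyzing the sign of $\partial F/\partial s$ to reduce to boundary values of $s$, or, following the precedent set for Claim~\ref{claim-case3} and Cases~1--2, by computer-assisted interval arithmetic as indicated in \cite{CAL}.

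The main obstacle is the final verification: even after the symmetry reduction, $F(c,s)$ is a rational function with several distinct denominators ($1-s$, $1-2g(c)+2s$, $s$, and products thereof), so a fully closed-form proof will involve tedious algebra. In particular, one must ensure $F(c,s)$ stays strictly positive as $s$ approaches the boundary $2g(c)-1$, since this is exactly where the Case~4 hypothesis $x_1+x_3<1/2$ degenerates. I expect the lower bound on $F$ to tighten but remain strictly positive there, consistent with the fact that Case~3 (and hence Claim~\ref{claim-case3}) takes over in that limiting regime.
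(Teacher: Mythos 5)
Your proposal is correct and follows essentially the same route as the paper: both reduce to the symmetric configuration $x_1=x_2=(x_1+x_2)/2$ by showing that $\frac{x_2}{1/2-x_1}+\frac{x_1}{1/2-x_2}$ and $\frac{x_4}{1/2-x_1}+\frac{x_4}{1/2-x_2}$ are minimized there (the paper via an explicit rational identity using $x_1+x_2\ge 1/2$ and Jensen, you via the derivative in $\tau^2$ and Jensen), and then verify the resulting two-variable inequality in $(c,\,x_1+x_2)$ by computer-assisted computation. Your explicit check that the coefficient of the first sum is non-negative is a detail the paper leaves implicit, but the argument is the same.
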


This completes the proof of Theorem \ref{thm-case-medium-K4}.
\end{proof}

\subsection{Proof of Theorem \ref{thm-sparse-half-regular-K4}}\label{subsec:together}
%Now Theorem \ref{thm-sparse-half-regular-K4} is just a simple corollary of
%Theorems \ref{thm-case-sparse-K4}, \ref{thm-case-dense-K4}, and \ref{thm-case-medium-K4}.
Let $G$ be a $K_4$-free regular graph on $n$ vertices such that every vertex set of size $\lf n/2 \rf$ in $G$ spans at least $n^2/18$ edges.
Our goal is to show that $n$ is divisible by 6 and $G\cong T_{3}(n)$

First we show that it suffices to consider the case that $n$ is divisible by $6$.
Assume that we have proved for all $n$ that are divisible by $6$,
and now consider the case that $n$ is not divisible by $6$.
Let $H$ be the blow-up of $G$ obtained by replacing every vertex $i\in V(G)$ by a set $V_i$ of size $6$
and replacing every edge $ij\in E(G)$ by a complete bipartite graph with parts $V_i$ and $V_j$.
Then $H$ contains $N:= 6n$ vertices and is $K_4$-free and regular,
hence by our assumption, if let $S\subset V(H)$ be a subset of size $N/2 = 3n$ spanning the minimum number of edges,
then we have $e(S)\le N^2/18$.
We may assume that $S$ either contains $V_i$ or is disjoint from $V_i$ for all but at most one $i$,
since if there are two indices $i,j$ satisfying $1\leq |S\cap V_\ell|\leq 5$ for $\ell\in \{i,j\}$,
then we could increase one of the intersections and decreasing the other until $|S\cap V_\ell|\in \{0,6\}$ for some $\ell$, without increasing $e(S)$.
So, $S$ contains $\lf n/2 \rf$ sets $V_i$.
By our assumption on $G$, $e(S)\geq 36 \lc n^2/18 \rc > N^2/18$, a contradiction.

Now we assume that $n$ is divisible by $6$.
Let $e(G)=cn^2$ for some $c\in (0,1/3]$. Then every vertex in $G$ has degree $2cn$.
If $c\leq 0.26$, then by Theorem~\ref{thm-case-sparse-K4},
there exists a vertex set of size $n/2$ that spans strictly less than $n^2/18$ edges, a contradiction.
If $c\geq 0.295$, then by Theorem~\ref{thm-case-dense-K4}, we can derive that $G\cong T_{3}(n)$.
So it remains to consider $0.26<c<0.295$.
In this case, by Theorem~\ref{thm-case-medium-K4}, $G$ contains a vertex set of size $n/2$
that spans strictly less than $n^2/18$ edges, again a contradiction.
We have completed the proof of Theorem \ref{thm-sparse-half-regular-K4}. \qed

%%%%%%%%%%%%%%%%%%%%%%%%%%%%%%%%%%%%%%%%%%%%%%%%
\section{Concluding remarks}
In this paper we consider the local density problem,
and prove Conjecture \ref{conj-Chung-Graham-sparse-half-K4} for all $K_4$-free regular graphs.
To fully resolve Conjecture \ref{conj-Chung-Graham-sparse-half-K4} there are two barriers in our proofs.
The first one is the minimum degree condition:
the proof of Theorem \ref{thm-case-dense-K4} requires the minimum degree to be at least $4n/7$ for the structure from Theorem~\ref{thm-Lyle-structure-K4},
while the proof of Theorem \ref{thm-case-medium-K4} requires the minimum degree to be at least $n/2$ for a good lower bound on the number of triangles.
The other barrier is the regular condition:
the proof of Case 1 of Theorem \ref{thm-case-medium-K4} (i.e., the footnote 3) requires $G$ to be regular for obtaining a lower bound on the number
of edges that contains at least one vertex of $V_4$.

A closely related problem is the problem of making a graph bipartite.
A famous conjecture of Erd\H{o}s \cite{E76} states that every triangle-free graph on $n$ vertices can be made bipartite by deleting at most $n^2/25$ edges.
This is still open, with the extremal graphs to be the balanced blow-ups of the 5-cycle.
Following from Krivelevich's observation \cite{K95}, we see that for regular graphs,
Conjecture~\ref{conj-erdos-sparse-halves-triangle} would imply the above conjecture of Erd\H{o}s.
So it seems interesting (but perhaps still difficult) to attack Conjecture~\ref{conj-erdos-sparse-halves-triangle} for regular graphs.

For analogous problems on $K_r$-free graphs and other related problems, we direct interested readers to \cite{CG90,EFPS88,EFRS94,S07}.

%%%%%%%%%%%%%%%%%%%%%%%%%%%%%%%%%%%%%%%%%%%%%%%%
\section*{Acknowledgement}
The authors would like to thank Jun Gao for carefully reading a draft of this paper.
%are grateful to Michael Krivelevich for helpful discussions.

%%%%%%%%%%%%%%%%%%%%%%%%%%%%%%%%%%%%%%%%%%%%%%%%%%%
%\bibliographystyle{elsarticle-num}
\bibliographystyle{abbrv}
\bibliography{local_density}
%%%%%%%%%%%%%%%%%%%%%%%%%%%%%%%%%%%%%%%%%%%%%%%%%%%%

%%%%%%%%%%%%%%%%%%%%%%%%%%%%%%%%%%%%%%%%%%%%%%%%%
\section*{Appendices}

\appendix
%\addappheadtotoc

\section{Proof of Theorem \ref{obsevation-triangle-free-Kriveleich}}
In this section we prove Theorem \ref{obsevation-triangle-free-Kriveleich}.
We need the following lemmas.

\begin{lemma}[\cite{EFRS94}]\label{lemma-triangle-large-matching}
Let $0 < \alpha \le 1$ and let
$G$ be a triangle-free graph on $\alpha n$ vertices with at least $(2\alpha-1)n^2/4$ edges.
Then $G$ contains a matching with at least $(2\alpha -1)n/2$ edges.
\end{lemma}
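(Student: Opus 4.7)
The plan is to analyze a maximum matching of $G$ and use triangle-freeness to upper bound $e(G)$ in terms of the matching number; then invert this bound using the edge assumption. Let $\nu=\nu(G)$ be the matching number, let $M=\{a_ib_i:1\le i\le \nu\}$ be a maximum matching, $V(M)=\{a_i,b_i:1\le i\le \nu\}$ (so $|V(M)|=2\nu$), and $U=V(G)\setminus V(M)$ (so $|U|=\alpha n-2\nu$).

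First I would observe three basic facts. By the maximality of $M$, the set $U$ is independent, so all edges of $G$ lie either inside $V(M)$ or between $V(M)$ and $U$. Since $G[V(M)]$ is triangle-free on $2\nu$ vertices, Mantel's theorem yields $e(G[V(M)])\le \nu^{2}$. Finally, for every $u\in U$ and every matching edge $\{a_i,b_i\}$, if $u$ were adjacent to both $a_i$ and $b_i$ then $\{u,a_i,b_i\}$ would be a triangle; hence $u$ has at most one neighbour in each matching edge, giving $d_G(u)\le \nu$ for all $u\in U$ and therefore
\[
e_G(U,V(M))\;\le\;\nu\,|U|\;=\;\nu(\alpha n-2\nu).
\]
Adding these bounds produces
\[
e(G)\;\le\;\nu^{2}+\nu(\alpha n-2\nu)\;=\;\nu(\alpha n-\nu).
\]

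Combining this inequality with the hypothesis $e(G)\ge (2\alpha-1)n^{2}/4$ yields the quadratic inequality
\[
\nu^{2}-\alpha n\,\nu+\frac{(2\alpha-1)n^{2}}{4}\;\le\;0.
\]
Its discriminant equals $\alpha^{2}n^{2}-(2\alpha-1)n^{2}=(1-\alpha)^{2}n^{2}$, so the two roots are $\frac{(\alpha\pm(1-\alpha))n}{2}$, namely $\frac{(2\alpha-1)n}{2}$ and $\frac{n}{2}$. Consequently $\nu\ge\frac{(2\alpha-1)n}{2}$, which gives a matching of the claimed size.

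There is no real obstacle here: the argument is a short direct application of Mantel's theorem together with the observation that unmatched vertices see each matching edge at most once. The only place where one must be slightly careful is verifying that $U$ is independent (which follows immediately from $M$ being a maximum matching, not merely maximal) and that the discriminant of the resulting quadratic factors as a perfect square; both are routine.
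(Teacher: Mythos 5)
Your argument is correct. The paper does not actually prove this lemma --- it is quoted from \cite{EFRS94} without proof --- so your write-up supplies a self-contained justification rather than replicating anything in the text. The three estimates are all sound: the uncovered set $U$ is independent, Mantel gives $e(G[V(M)])\le\nu^2$, and triangle-freeness forces each $u\in U$ to meet each matching edge at most once, so $e(G)\le \nu(\alpha n-\nu)$; combined with $e(G)\ge(2\alpha-1)n^2/4$ the quadratic factors cleanly and yields $\nu\ge(2\alpha-1)n/2$ (with the other root $n/2$ indeed lying above it since $\alpha\le1$, and the statement being vacuous for $\alpha\le1/2$). One small correction to your closing remark: the independence of $U$ already follows from $M$ being a \emph{maximal} matching (an edge inside $U$ could be added to $M$); you do not need the stronger maximum property for that step, though of course a maximum matching is maximal, so nothing breaks.
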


\begin{lemma}\label{lemma-bipartite-local-density}
Let $1/2 \le \alpha \le 1$, $n \in \mathbb{N}$ and $\alpha n\in\mathbb{N}$.
Let $G$ be bipartite graph on $n$ vertices.
If every vertex set of size $\alpha n$ in $G$ spans at least $(2\alpha-1)n^2/4$ edges,
then $G \cong T_{2}(n)$.
\end{lemma}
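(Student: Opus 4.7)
The plan is to produce a small $\alpha n$-vertex subset $S$ by taking \emph{all} of the smaller part of the bipartition together with a uniformly random slice of the larger part, and then combine the resulting upper bound on $e(S)$ with the hypothesis and the bipartite edge bound $m\leq a(1-a)n^2$ to force both parts to have size $n/2$ and $m=n^2/4$. Fix a bipartition $V(G)=A\cup B$ with $|A|=an$ and $|B|=(1-a)n$, and assume without loss of generality that $a\leq 1/2$; write $m=e(G)$.

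If $a\leq 1-\alpha$, then $|B|\geq \alpha n$ and any $\alpha n$-subset of $B$ is independent, contradicting the hypothesis (this uses $\alpha>1/2$, which holds in every application of the lemma, notably in the proof of Theorem~\ref{obsevation-triangle-free-Kriveleich}). Otherwise $a>1-\alpha$, and I pick $A'\subseteq A$ uniformly at random with $|A'|=(\alpha+a-1)n$; then $S:=A'\cup B$ has size $\alpha n$, and each edge of $G$ (which necessarily lies between $A$ and $B$) is contained in $G[S]$ with probability exactly $(\alpha+a-1)/a$. Some choice of $S$ therefore satisfies $e(S)\leq\tfrac{\alpha+a-1}{a}\,m$, which together with the hypothesis $e(S)\geq (2\alpha-1)n^2/4$ and the bipartite bound $m\leq a(1-a)n^2$ yields
\[
4(1-a)(\alpha+a-1)\geq 2\alpha-1.
\]

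Setting $y=1-a$, this becomes $4y^2-4\alpha y+(2\alpha-1)\leq 0$, a quadratic with roots $y=\alpha-\tfrac{1}{2}$ and $y=\tfrac{1}{2}$. Since $\alpha\leq 1$ and $y=1-a\geq\tfrac{1}{2}$, the only feasible value is $y=\tfrac{1}{2}$, i.e., $a=\tfrac{1}{2}$. Plugging $a=\tfrac{1}{2}$ back in, the bounds $m\geq\tfrac{a(2\alpha-1)n^2}{4(\alpha+a-1)}=n^2/4$ and $m\leq a(1-a)n^2=n^2/4$ meet at equality, so $m=n^2/4$ with $|A|=|B|=n/2$; the unique such bipartite graph is $K_{n/2,n/2}\cong T_2(n)$. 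I do not expect serious obstacles; the only delicate point is the boundary case $\alpha=1$, where the quadratic degenerates to $(2y-1)^2$ with a double root at $\tfrac{1}{2}$, but the conclusion $y=\tfrac{1}{2}$ still survives without extra work.
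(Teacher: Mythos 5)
Your proposal is correct and is essentially the paper's own argument: both take the entire larger part of the bipartition together with a uniformly random slice of the smaller part, and your chain $e(S)\le\tfrac{\alpha+a-1}{a}m\le(\alpha+a-1)(1-a)n^2\le(2\alpha-1)n^2/4$ is the same tightness computation the paper performs (with $x=1-a$), forcing a balanced bipartition and a complete bipartite graph. The only remark worth adding is that, like the paper's proof, yours silently needs $\alpha>1/2$ (the statement is vacuous, indeed false, at $\alpha=1/2$), which you correctly note holds in the lemma's application.
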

\begin{proof}
Let $V_1\cup V_2 = V(G)$ be a partition such that $G$ is a bipartite graph with parts $V_1$ and $V_2$.
Let $x = |V_1|/n$ and we may assume that $x\ge 1/2$.
By assumption, $x < \alpha$, since otherwise there would be a subset of $A$ of size $\alpha n$
that spans zero edges.
Now choose a random set $S\subset B$ with $|S| = (\alpha-x)n$.
Then $|A\cup B| = \alpha n$ and
$e(A \cup S) \le x (\alpha-x)n^2 \le (2\alpha-1)n^2/4$.
By assumption the inequality above must be tight, which means
$x=1/2$ and $G[A,S]$ is a complete bipartite graph.
Since $S$ was chosen randomly, $G$ must be a complete bipartite graph with $|V_1|=n/2$.
Therefore, $G \cong T_2(n)$.
\end{proof}

%\begin{lemma}\label{lemma-C5-local-density}
%Let $17/30 < \alpha \le 1$
%Let $G$ be a blow-up (not necessarily balanced) of $C_5$ on $n$ vertices.
%Then there exists a vertex set of size $\alpha n$ in $G$ that spans strictly less than $(2\alpha-1)n^2/4$ edges.
%\end{lemma}
%\begin{proof}
%Let $V_1\cup \cdots V_5 = V(G)$ such that $G$ is a blow-up of $C_5$ with parts $V_1,\ldots,V_5$
%such that $uv \in E(G)$ only if $u\in V_i$ and $v\in V_{i+1}$ for some $i\in [5]$ (indices are taken under module of %$5$).
%\end{proof}

Now we prove Theorem \ref{obsevation-triangle-free-Kriveleich}.

\begin{proof}[Proof of Theorem \ref{obsevation-triangle-free-Kriveleich}]
First one could see from Kriveleich's proof (i.e. the proof of Theorem 4) in \cite{K95}
that if $G$ does not contain an independent set of size $(1-\alpha)n$, then
there exists a vertex set of size $\alpha n$ in $G$ that spans strictly less than $\frac{2\alpha -1}{4}n^2$ edges.
So by assumption there exists an independent set in $G$ whose size is $(1-\alpha)n$.
Next, we use the argument of Erd\H{o}s et al. \cite{EFRS94} to show that $G \cong T_2(n)$.

Let $A \subset V(G)$ be an independent set of size $(1-\alpha)n$.
By Lemma \ref{lemma-triangle-large-matching},
there exists a matching $M$ in $G[V(G)\setminus A]$ with $(2\alpha -1)n/2$ edges.
Let $C = V(M)$ and let $B = V(G)\setminus (A\cup C)$.
Note that $|C| = (2\alpha -1)n$.

Since $G$ is triangle-free and $M$ is a matching,
every vertex in $A$ is adjacent to at most half of the vertices in $C$.
Therefore, $e(A,C) \le (1-\alpha)(2\alpha-1)n^2/2$ and hence
\begin{align}
e(A\cup C)
= e(A,C) + e(C)
\le \frac{ (1-\alpha)(2\alpha-1)n^2}{2} + \frac{(2\alpha -1)^2n^2}{4}
= \frac{2\alpha-1}{4}n^2. \notag
\end{align}
Since $|A\cup C| = \alpha n$, by assumption, $e(A\cup C) \ge (2\alpha-1)n^2/4$.
So all inequalities above must be tight, which means
$G[C]$ is a balanced complete bipartite graph.
and every vertex in $A$ is adjacent to exactly half of the vertices in $C$.

Let $C_1 \cup C_2 = C$ be a partition such that $G[C] = G[C_1,C_2]$ and note that
$|C_1| = |C_2| = |C|/2 = (2\alpha-1)n/2$.
For $i \in \{1,2\}$ let
\begin{align}
    A_i  = \{u\in A: \exists v\in C_i, uv\in E(G)\}
    \quad {\rm and} \quad
    B_i  = \{u\in B: \exists v\in C_i, uv\in E(G)\}, \notag
\end{align}
and let $B_3 = B\setminus(B_1\cup B_2)$.
Since $G[C_1,C_2]$ is a complete bipartite graph and every $v \in A$ is adjacent to at least half vertices in $C_1\cup C_2$,
we have $uw\in E(G)$ for all $u \in A_i$ and $w \in C_{i}$ for $i\in\{1,2\}$.
Notice that $A_1 \cup A_2$ is a partition of $A$, and
for $i\in \{1,2\}$ we have $uv\not\in E(G)$ for all $u\in B_i, v\in A_{i}$,
since otherwise there exists $w\in C_1$ such that $u,v,w$ induces a copy of $K_3$ in $G$, a contradiction.
Therefore, if $B_3 = \emptyset$, then $G$ is bipartite with two parts $V_1 = C_1 \cup A_2 \cup B_2$ and
$V_2 = C_2 \cup A_1\cup B_1$, and by Lemma \ref{lemma-bipartite-local-density}, $G \cong T_2(n)$.
So we may assume that $B_3 \neq \emptyset$.

Let $\widehat{C}_{1} = C_1 \cup B_2$ and $\widehat{C}_{2} = C_2 \cup B_1$.
Let $x_i = |A_i|/n$, $y_i = |\widehat{C}_{i}|/n$ for $i\in\{1,2\}$, and $z = |B_3|/n$.
Since $|\widehat{C}_{1} \cup \widehat{C}_{2} \cup A_1 \cup B_3| = n-|A_{2}| \ge \alpha n$ ,
there exists $U_1 \subset \widehat{C}_{1}$ with
$|U_1| = \alpha n - |B_3 \cup A_1 \cup \widehat{C}_{2}| = (\alpha - z- x_1 -y_{2})n$.
Since $|B_3 \cup A_1 \cup \widehat{C}_{2} \cup U_1| = \alpha n$, by assumption
\begin{align}
\frac{2\alpha-1}{4}n^2 \le e(B_3 \cup A_1 \cup \widehat{C}_{2} \cup U_1)
\le z x_1n^2 + (x_1+y_2)(\alpha - z- x_1 -y_{2})n^2. \notag
\end{align}
Similarly, there exists $U_2 \subset  \widehat{C}_{2}$ with
$|U_2| = (\alpha - z- x_2 -y_{2})n$, and
\begin{align}
\frac{2\alpha-1}{4}n^2 \le e(B_3 \cup A_2 \cup \widehat{C}_{1} \cup U_2)
\le z x_2 n^2 + (x_2+y_1)(\alpha - z- x_2 -y_{1})n^2. \notag
\end{align}
Adding up these two inequalities we obtain
\begin{align}
\frac{2\alpha-1}{2}
& \le z x_1 + (x_1+y_2)(\alpha - z- x_1 -y_{2})
 + z x_2  + (x_2+y_1)(\alpha - z- x_2 -y_{1}) \notag\\
& = \alpha(x_1+x_2+y_1+y_2)-z(y_1+y_2) -\left( (x_1+y_2)^2+(x_2+y_1)^2\right) \notag\\
%& = \alpha (1-z) -z (\alpha-z) - \left( (x_1+y_2)^2+(x_2+y_1)^2\right) \notag\\
& \le \alpha (1-z) -z (\alpha-z) - \frac{(x_1+x_2+y_1+y_2)^2}{2} \notag\\
& = \alpha (1-z) -z (\alpha-z) - \frac{(1-z)^2}{2}
 = \frac{z^2}{2} - (2a-1)z + \frac{2\alpha-1}{2}, \notag
\end{align}
which implies that $z^2/2 - (2a-1)z \ge 0$.
However, since $0 < z \le 1-\alpha < 4\alpha-2$ (here we used $\alpha > 3/5$ and $B_3\neq \emptyset$),
\begin{align}
\frac{z^2}{2} - (2a-1)z = \frac{z}{2}(z-(4\alpha-2)) < 0, \notag
\end{align}
a contradiction.
\end{proof}

%%%%%%%%%%%%%%%%%%%%%%%%%%%%%%%%%%%%%%%%%%%%%%%%%%%%%%
\section{Proofs of Claims \ref{claim-case3} and \ref{claim-case4}}
In this section we prove Claims \ref{claim-case3} and \ref{claim-case4}.

%\begin{claim}\label{claim-case3}
%\begin{align}
%\frac{x^2_2+x^2_3+x^2_4}{9x_1(1-2x_1)} - \frac{x_4^2}{6x_1} + \frac{1}{6} - c > 0. \notag
%\end{align}
%\end{claim}
\begin{proof}[Proof of Claim \ref{claim-case3}]
Since $x_2^2 + x_3^2 \ge (x_2+x_3)^2/2$, it suffices to show
\begin{align}
\frac{(x_2+x_3)^2/2+x^2_4}{9x_1(1-2x_1)} - \frac{x_4^2}{6x_1} + \frac{1}{6} - c > 0. \notag
\end{align}
Plugging $x_4 = 1-g(c)$ and $x_2+x_3=g(c)-x_1$ into the inequality above, it becomes
\begin{align}
\ell(c,x_1) :=\frac{(g(c)-x_1)^2+2(1-g(c))^2}{18x_1(1-2x_1)} - \frac{(1-g(c))^2}{6x_1} + \frac{1}{6} - c > 0. \tag{3.5}
\end{align}
Then with the aid of computer \cite{CAL}
%\footnote{\label{footnote-cal}A simple Mathematica worksheet verifying this fact
%can be found at the web page {\tt http://staff.ustc.edu.cn/\~{}jiema/localdensitycal.pdf}.}
one can see that
\begin{align}
\min\left\{\ell(c,x): x\in (0,1/2), c\in [1/4,1/3]\right\} > 0.003. \notag
\end{align}
Therefore, $(3.5)$ is true.
\end{proof}

%\begin{claim}\label{claim-case4}
%\begin{align}
%c + \frac{(1-x_1-x_2)x_4}{6(x_1+x_2)}
%& < \frac{1}{18} \left( \left(1+\frac{1}{1-2x_3}-\frac{1}{x_1+x_2}\right)\left(\frac{x_2}{1/2-x_1}+\frac{x_1}{1/2-x2}\right) \right.\notag\\
%& \quad \left.+ \frac{1}{1-2x_3} + \frac{1}{2(x_1+x_2)}\left(\frac{x_4}{1/2-x_1}  + \frac{x_4}{1/2-x_2} \right) +1 \right). \notag
%\end{align}
%\end{claim}
\begin{proof}[Proof of Claim \ref{claim-case4}]
First since $1/(1/2-x)$ is concave up, by Jensen's inequality
\begin{align}
\frac{x_4}{1/2-x_1}  + \frac{x_4}{1/2-x_2} \ge \frac{4x_4}{1-(x_1+x_2)}. \notag
\end{align}
Since $x_1+x_2 \ge 1/2$ and $x_2\le x_1 < 1/2$,
\begin{align}
\frac{x_2}{1/2-x_1}+\frac{x_1}{1/2-x_2} - \frac{2(x_1+x_2)}{1-(x_1+x_2)}
= \frac{2(x_1-x_2)^2(2x_1+2x_2-1)}{(1-2x_1)(1-2x_2)(1-x_1-x_2)} \ge 0. \notag
\end{align}
It suffice to show that
\begin{align}
c + \frac{(1-x_1-x_2)x_4}{6(x_1+x_2)}
& < \frac{1}{18} \left( \left(1+\frac{1}{1-2x_3}-\frac{1}{x_1+x_2}\right) \cdot \frac{x_1+x_2}{1-(x_1+x_2)} \right.\notag\\
& \quad \left.+ \frac{1}{1-2x_3} + \frac{1}{2(x_1+x_2)}\cdot \frac{4x_4}{1-(x_1+x_2)} +1 \right), \notag
\end{align}
Let $x = x_1+x_2$. Then $x_3 = g(c)-x$ and the inequality above can be simplified as
\begin{align}
m(x,c) & := \left(1+\frac{1}{1-2(g(c)-x)}-\frac{1}{x}\right) \cdot \frac{x}{1-x}  \notag\\
& \quad +  \frac{1}{1-2(g(c)-x)} + \frac{2(1-g(c))}{x(1-x)} +1 -  \frac{3(1-x)(1-g(c))}{x} -18c>0. \tag{4.5}
\end{align}
Then with the aid of computer \cite{CAL}
%\footref{\ref{footnote-cal}
%\footnote{A simple Mathematica worksheet verifying this fact
%can be found at the web page {\tt http://staff.ustc.edu.cn/\~{}jiema/localdensitycal.pdf}.}
one can see that
\begin{align}
\min\left\{m(x,c): x\in [1/2,1], c\in [1/4,1/3]\right\} > 0.099. \notag
\end{align}
Therefore, $(4.5)$ is true.
\end{proof}

\end{document}